\numberwithin{equation}{section}
\title{Existence of solutions to the Poisson--Nernst--Planck system with singular permanent charges in $\mathbb{R}^2$}
\author{Chia-Yu Hsieh\footnote{Department of Mathematics and The Institute of Mathematical Sciences, The Chinese University of Hong Kong, Sha Tin, New Territories, Hong Kong; email: cyhsieh@math.cuhk.edu.hk} \hspace{2pt} and \hspace{2pt} Yong Yu \footnote{Department of Mathematics, The Chinese University of Hong Kong, Sha Tin, New Territories, Hong Kong; email: yongyu@math.cuhk.edu.hk; Y. Yu is partially supported by RGC grants of Hong Kong (14302718, 14306217).}}
\date{}
\newtheorem{thm}{Theorem}[section]
\newtheorem{defn}[thm]{Definition}
\newtheorem{lem}[thm]{Lemma}
\newtheorem{cor}[thm]{Corollary}
\newtheorem{rmk}[thm]{Remark}
\begin{document}
\maketitle

\begin{abstract}
In this paper, we study the well-posedness of Poisson--Nernst--Planck system with no-flux boundary condition and singular permanent charges in two dimension. The main difficulty comes from the lack of integrability of singular permanent charges. In order to overcome the difficulty, the main idea is to transform the system into another weighted parabolic system. By choosing suitable weighted spaces, local existence of solutions can be obtained based on a fixed-point argument. Moreover, we also proved global existence by energy estimates under the smallness assumption on initial data.
\end{abstract}

\noindent {\bf Mathematics Subject Classification.} 35K51, 35K57, 35K67.

\noindent {\bf Keywords.} Poisson--Nernst--Planck system, Singular permanent charge, Existence of solutions.

\section{Introduction}

The Poisson--Nernst--Planck (PNP) system is a basic model for the transport of charged particles. It has been applied in many applications both in physics and biology \cite{CSH17,CLE97,CI19,E98,H01,MRS90,MJP07,RCA89,RLW06,RLZ07,
WXLLS14,ZGLWS08}, sometimes modified by taking into account such as the sizes of ions, the interaction between ions and thermal effects \cite{GNE02,HHLLL15,HLLL20,KBA07,KBA07-2,LE14,LJX18,WLT16,XSL14}. The classical PNP system can be represented as
\begin{align}
\partial_t c_n &= \nabla \cdot \left( \nabla c_n - c_n \nabla \phi \right), \label{eqn_n}\\
\partial_t c_p &= \nabla \cdot \left( \nabla c_p + c_p \nabla \phi \right), \label{eqn_p}\\
\Delta \phi &= c_n - c_p - c_f \label{eqn_phi}
\end{align}
for $x \in \Omega$, $t > 0$, where $\Omega$ is a bounded domain with smooth boundary in $\mathbb{R}^2$. Here $c_n = c_n(x,t)$ and $c_p = c_p(x,t)$ represent the concentrations of anions and cations, respectively. $c_f = c_f(x)$ is the concentration of the permanent charge. $\phi = \phi(x,t)$ is the electric potential.

The system (\ref{eqn_n})--(\ref{eqn_phi}) consists of Nernst--Planck equations for ions $c_n$ and $c_p$, which state that the fluxes of ions are influenced by both the diffusion and the electric field, which is the gradient of the electric potential $\nabla \phi$. By Gauss' law, the electric potential $\phi$ can be determined by the charge density.  The well-posedness and long-time behavior of solutions to the PNP system have been studied by many mathematicians \cite{BD00,BHN94,G85,GG86,H19,HL15,M75}. To our best knowledge, all the known results concern the permanent charges with certain integrability or regularity, say, $c_f$ is in some $L^r$, $r > d$, where $d$ denotes the dimension of the spatial space, or H\"{o}lder continuous. Since point charges are usually described by Dirac measures, we are interested in the case that $c_f$ is an assemble of Dirac measures. Hence, in this work, we consider the PNP system (\ref{eqn_n})--(\ref{eqn_phi}) with
\begin{align}\label{singular_charge}
c_f = -2\pi \sum_{j=1}^m \alpha_j \delta_{x_j},
\end{align}
where $x_j \in \Omega$ are distinct points, $\alpha_j \neq 0$, $\delta_{x_j}$ denotes the Dirac measure centered at $x_j$, $j = 1, ..., m$.

\begin{rmk}
In \cite{BN97}, the authors considered a related problem with a single specie of ions and a permanent point charge fixed at the origin. In order to study the equation with singular charges, they adopted the notion of solutions obtained as limits of approximations, which is a different approach from ours.
\end{rmk}

We denote the initial data for $c_n$ and $c_p$ as
\begin{align}
c_n(x,0) &= c_{n,0}(x) \geq 0, \label{initial_n}\\
c_p(x,0) &= c_{p,0}(x) \geq 0, \label{initial_p}
\end{align}
for some non-negative functions $c_{n,0}$ and $c_{p,0}$. In order to describe an insulated boundary so that ions will not move across the boundary, it is natural to impose no-flux boundary conditions for $c_n$ and $c_p$. That is,
\begin{align}
\left( \nabla c_n - c_n \nabla \phi \right) \cdot \nu &= 0, \label{BC_n}\\
\left( \nabla c_p + c_p \nabla \phi \right) \cdot \nu &= 0 \label{BC_p}
\end{align}
for $x \in \partial\Omega$, $t > 0$, where $\nu$ is the unit outer normal of $\partial \Omega$. As for $\phi$, we use the Dirichlet boundary condition
\begin{align}
\phi = g \label{BC_phi}
\end{align}
for $x \in \partial\Omega$, $t > 0$, for some given smooth function $g$, to describe the given surface potential. Notice that the initial for $\phi$, i.e., $\phi(x,0)$ can be determined by (\ref{eqn_phi}) and (\ref{BC_phi}).

\begin{rmk}
The boundary condition for the electric potential $\phi$ depends on what information we have on the boundary. Usually, it is given by a Dirichlet boundary condition describing a given surface potential, or a Neumann boundary condition describing a given surface charge distribution. While in some literatures, Robin type boundary conditions are used to take into account of the capacitance effects (see, for example, \cite{Lee16}).
\end{rmk}

When there is no permanent charge $c_f = 0$, the PNP system (\ref{eqn_n})--(\ref{eqn_phi}) supplemented with the initial and boundary conditions (\ref{initial_n})--(\ref{BC_phi}) is studied in \cite{BHN94}. The authors established global existence for two dimensional spatial space and local existence for higher dimensional case. The purpose of this work is to study existence of solutions to PNP system with singular permanent charges (\ref{eqn_n})--(\ref{BC_phi}). The main difficulty comes from point singularities in the Poisson equation (\ref{eqn_phi}).

It is natural to deal with Dirac measures by using the Green function. That is, by letting $G$ be a function satisfying $\Delta G = 2\pi \sum_{j=1}^m \alpha_j \delta_{x_j}$, then we consider $\psi := \phi - G$, which solves the Poisson equation $\Delta \psi = c_n - c_p$ without Dirac measures. However, under the transformation, there exists the coefficient $\nabla G$ in the drift terms of the equations of $c_n$ and $c_p$. Notice that $\nabla G \in L^r$ only for $r \in [1,2)$. Because of the lack of integrability of the coefficient $\nabla G$, the problem cannot be solved by the parabolic theory in the usual Sobolev spaces. In order to overcome the difficulty, we reformulate the problem to be a parabolic system with weight functions, which are possibly singular and degenerate at some points. Furthermore, by choosing weighted Sobolev spaces with suitable embedding and compactness, we can establish the well-posedness by using Schauder's fixed-point theorem.

\begin{rmk}\label{rmk_1}
Stationary solutions of (\ref{eqn_n})--(\ref{eqn_phi}) with (\ref{BC_n})--(\ref{BC_phi}) satisfy the charge-conserving Poisson--Boltzmann (CCPB) equation (see \cite{HY20,Lee16,LLHLL11,LR18}). In \cite{HY20}, the authors proved that the CCPB equation with the permanent charges $2\pi \sum_{j=1}^m \alpha_j \delta_{x_j}$ admits a unique solution for given boundary condition and permanent charges provided $|\alpha_j| \leq 2$ for $j = 1, ..., m$, in two-dimensional space. By contrast, for higher-dimensional case $\Omega \subset \mathbb{R}^d$, $d \geq 3$, all isolated singularities of solutions of CCPB equation are removable (see Proposition 1.1 in \cite{HY20}). It motivates us to study this problem in $\mathbb{R}^2$.
\end{rmk}

Now we introduce the reformulation as follows.

\subsection{Reformulation}\label{sec_1_1}

First, we use Green function to replace the Dirac measures. Let
\begin{align}\label{def_G}
G(x) = \sum_{j = 1}^m \alpha_j \log{|x - x_j|} + H(x),
\end{align}
where $H$ satisfies
\begin{align}\label{def_H}
\begin{cases}
\Delta H = 0 \quad &\mbox{in } \Omega, \\[3mm]
H = -\displaystyle\sum_{j = 1}^m \alpha_j \log{|x - x_j|} + g \quad &\mbox{on } \partial\Omega.
\end{cases}
\end{align}
That is, $G$ solves
\begin{align*}
\begin{cases}
\Delta G = 2\pi \displaystyle\sum_{j=1}^m \alpha_j \delta_{x_j} \quad &\mbox{in } \Omega, \\[3mm]
G = g \quad &\mbox{on } \partial\Omega.
\end{cases}
\end{align*}
Therefore, we can decompose $\phi$ into two parts $\phi = \psi + G$ such that the equation for $\psi$ does not involve Dirac measures. In the meantime, as we introduced before, we further transform $(c_n,c_p)$ to $(u,v)$ by multiplying weighted functions to $(c_n,c_p)$ as
\begin{align}
u(x,t) &= e^{-G(x)} c_n(x,t) = \prod_{j = 1}^m |x - x_j|^{-\alpha_j} e^{-H(x)} c_n(x,t), \label{def_u}\\
v(x,t) &= e^{G(x)} c_p(x,t) = \prod_{j = 1}^m |x - x_j|^{\alpha_j} e^{H(x)} c_p(x,t), \label{def_v}
\end{align}
and we have
\begin{align}
\psi(x,t) &= \phi(x,t) - G(x). \label{def_psi}
\end{align}
Using the equations of $(c_n,c_p,\phi)$, we obtain the equations of $(u,v,\psi)$ as follows:
\begin{align}
\prod_{j = 1}^m |x - x_j|^{\alpha_j} e^{H(x)} u_t &= \nabla \cdot \left[ \prod_{j = 1}^m |x - x_j|^{\alpha_j} e^{H(x)} (\nabla u - u \nabla \psi) \right] \label{eqn_u}\\
\prod_{j = 1}^m |x - x_j|^{-\alpha_j} e^{-H(x)} v_t &= \nabla \cdot \left[ \prod_{j = 1}^m |x - x_j|^{-\alpha_j} e^{-H(x)} (\nabla v + v \nabla \psi) \right] \label{eqn_v}\\
\Delta \psi &= \prod_{j = 1}^m |x - x_j|^{\alpha_j} e^{H(x)} u - \prod_{j = 1}^m |x - x_j|^{-\alpha_j} e^{-H(x)} v \label{eqn_psi}
\end{align}
for $x \in \Omega$, $t > 0$.
The corresponding initial and boundary conditions become
\begin{align}
\left( \nabla u - u \nabla \psi \right) \cdot \nu &= 0 \label{BC_u}\\
\left( \nabla v + v \nabla \psi \right) \cdot \nu &= 0 \label{BC_v}\\
\psi &= 0 \label{BC_psi}
\end{align}
for $x \in \partial\Omega$, $t > 0$, and
\begin{align}
u(x,0) &= u_0(x) = \prod_{j = 1}^m |x - x_j|^{-\alpha_j} e^{-H(x)} c_{n,0}(x), \label{initial_u}\\
v(x,0) &= v_0(x) = \prod_{j = 1}^m |x - x_j|^{\alpha_j} e^{H(x)} c_{p,0}(x). \label{initial_v}
\end{align}
Also,
\begin{align}\label{initial_psi}
\psi(x,0) = \psi_0(x)
\end{align}
is determined by (\ref{eqn_psi}) and (\ref{BC_psi}). For simplicity, we let
\begin{align}\label{def_w}
w(x) = \prod_{j = 1}^m |x - x_j|^{\alpha_j} e^{H(x)},
\end{align}
and then equations (\ref{eqn_u})--(\ref{eqn_psi}) can be written as
\begin{align}
w u_t &= \nabla \cdot \left[ w (\nabla u - u \nabla \psi) \right], \label{new_eqn_u}\\
w^{-1} v_t &= \nabla \cdot \left[ w^{-1} (\nabla v + v \nabla \psi) \right], \label{new_eqn_v}\\
\Delta \psi &= w u - w^{-1} v. \label{new_eqn_psi}
\end{align}
Notice that $w$ and $w^{-1}$ have singularities and zeros. It is natural to look for solutions in weighted spaces. We introduce some notations for later use. For locally summable non-negative function $\mu$ on a domain $E$, $k \in \mathbb{N} \cup \{0\}$, $1 \leq p < \infty$, let $W_\mu^{k,p}(E)$ denote the weighted space $W^{k,p}(E,\mu dx)$. Also, we write $L_\mu^p(E) = W_\mu^{0,p}(E)$ and $H_\mu^k(E) = W_\mu^{k,2}(E)$. And the dual space of $H_\mu^1(E)$ is denoted by $H_\mu^{-1}(E)$.

Now, we define the weak solutions of the system (\ref{eqn_u})--(\ref{initial_psi}) in weighted spaces as follows.

\begin{defn}\label{def_sol}
For $T > 0$, $(u,v,\psi)$ is a solution of (\ref{eqn_u})--(\ref{initial_psi}) if
\begin{align*}
u \in L^2\left((0,T);H_w^1(\Omega)\right), \quad v \in L^2\left((0,T);H_{1/w}^1(\Omega)\right),
\end{align*}
and
\begin{align*}
u_t \in L^2\left((0,T);H_w^{-1}(\Omega)\right), \quad v_t \in L^2\left((0,T);H_{1/w}^{-1}(\Omega)\right),
\end{align*}
such that
\begin{align*}
&\quad\ \int_\Omega u(x,t) \xi(x,t) w(x) dx - \int_0^t \int_\Omega u \xi_t w dxdt + \int_0^t \int_\Omega (\nabla u - u \nabla \psi) \cdot \nabla \xi w dxdt \\
&= \int_\Omega u_0(x) \xi(x,0) w(x) dx, \\
&\quad\ \int_\Omega v(x,t) \eta(x,t) w^{-1}(x) dx - \int_0^t \int_\Omega v \eta_t w^{-1} dxdt + \int_0^t \int_\Omega (\nabla v + v \nabla \psi) \cdot \nabla \eta w^{-1} dxdt \\
&= \int_\Omega v_0(x) \eta(x,0) w^{-1}(x) dx, \\
\end{align*}
for each test function $\xi \in H^1\left((0,T);H_w^1(\Omega)\right)$, $\eta \in H^1\left((0,T);H_{1/w}^1(\Omega)\right)$ and almost every $t \in [0,T]$, where $\psi$ is the weak solution of (\ref{eqn_psi}) and (\ref{BC_psi}).
\end{defn}

This is a modification of the standard definition of weak solutions of parabolic equations (see Chapter III, Section 5 in \cite{LSU68}) for no-flux boundary conditions in weighted spaces.

In the following, we state our main results.

\subsection{Main results}

Our first result concerns with the local well-posedness of the problem (\ref{eqn_u})--(\ref{initial_psi}).

\begin{thm}\label{local_wellposedness}
Given non-negative initial data $u_0 \in L_w^2(\Omega)$ and $v_0 \in L_{1/w}^2(\Omega)$ and assuming that $0 < |\alpha_j| < 2\sqrt{2} - 2$, $j = 1, ..., m$, there exists $T = T\left(\|u_0\|_{L_w^2(\Omega)},\|v_0\|_{L_{1/w}^2(\Omega)}\right) > 0$ such that the system (\ref{eqn_u})--(\ref{initial_psi}) has a unique weak solution $(u,v,\psi)$ in $(0,T)$ with
\begin{align*}
u \in L^2\left((0,T);H_w^1(\Omega)
\right)\, \cap\, L^\infty((0,T);L_w^2(\Omega))
\end{align*}
and
\begin{align*}
v \in L^2\left((0,T);H_{1/w}^1(\Omega)\right)\, \cap\, L^\infty\left((0,T);L_{1/w}^2(\Omega)\right).
\end{align*}
Moreover, $u$ and $v$ are non-negative.
\end{thm}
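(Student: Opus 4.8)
The plan is to construct the solution by a Schauder fixed-point argument on a suitable weighted parabolic linearization, following the standard scheme but carefully tracking how the singular/degenerate weight $w = \prod_j |x-x_j|^{\alpha_j} e^H$ enters. First I would fix a large parameter and consider, for a given pair $(\bar u, \bar v)$ in a closed convex bounded subset $\mathcal{K}$ of $L^2((0,T); L_w^2(\Omega)) \times L^2((0,T); L_{1/w}^2(\Omega))$ with the extra cutoff that makes them non-negative, the potential $\psi$ obtained by solving $\Delta\psi = w\bar u - w^{-1}\bar v$ in $\Omega$ with $\psi = 0$ on $\partial\Omega$. The right-hand side here lies in $L^1(\Omega)$ (since $w\bar u = \prod|x-x_j|^{\alpha_j}e^H \bar u$ and $\bar u \in L_w^2$ means $w^{1/2}\bar u \in L^2$, so $w\bar u = w^{1/2}(w^{1/2}\bar u)$ and $w^{1/2} = \prod|x-x_j|^{\alpha_j/2}e^{H/2} \in L^2$ precisely when $|\alpha_j| < 2$, giving $w\bar u \in L^1$ by Cauchy–Schwarz), and in fact one gains integrability: $\nabla\psi$ belongs to every $L^q$, $q<2$, by standard $W^{1,q}$ estimates for the Poisson equation with $L^1$ data, and $\psi$ itself is bounded away from the $x_j$ and has at worst a controlled logarithmic-type blow-up. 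With $\psi$ in hand I would then solve the two linear weighted parabolic equations
\begin{align*}
w\, u_t = \nabla\cdot\bigl[w(\nabla u - u\nabla\psi)\bigr], \qquad w^{-1} v_t = \nabla\cdot\bigl[w^{-1}(\nabla v + v\nabla\psi)\bigr]
\end{align*}
with the no-flux boundary conditions and the given initial data, obtaining a new pair $(u,v)$; the map $(\bar u,\bar v)\mapsto(u,v)$ is the one to which Schauder is applied.

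The core analytic input is the solvability of the linear weighted parabolic problem together with the energy estimate. Testing the $u$-equation with $u$ itself gives
\begin{align*}
\frac{1}{2}\frac{d}{dt}\int_\Omega u^2 w\, dx + \int_\Omega |\nabla u|^2 w\, dx = \int_\Omega u\,\nabla u\cdot\nabla\psi\, w\, dx,
\end{align*}
and the right side must be absorbed. Here is where the constraint $|\alpha_j| < 2\sqrt2 - 2$ should enter: one writes the right-hand term as $\int (w^{1/2}\nabla u)(w^{1/2} u)\cdot\nabla\psi$, uses Cauchy–Schwarz to peel off $\|w^{1/2}\nabla u\|_{L^2}$, and then needs $\|w^{1/2} u\, \nabla\psi\|_{L^2} \le \varepsilon\|w^{1/2}\nabla u\|_{L^2} + C\|w^{1/2}u\|_{L^2}$, which requires a weighted interpolation/Hardy-type inequality controlling $\|w^{1/2}u\nabla\psi\|_{L^2}$ by the weighted $H^1$ norm of $u$. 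Since $\nabla\psi \in L^q$ for $q$ up to $2$, by Hölder one needs $w^{1/2}u$ in $L^{q'}$ for $q'$ slightly above $2$, and the weighted Sobolev embedding $H_w^1(\Omega)\hookrightarrow L_w^{q'}$ — whose validity near each $x_j$ amounts to a Caffarelli–Kohn–Nirenberg inequality with weight $|x-x_j|^{\alpha_j}$ — holds for a range of exponents that shrinks as $|\alpha_j|$ grows; the threshold $2\sqrt2-2$ is exactly what makes the two requirements compatible. I would isolate this as a weighted embedding/interpolation lemma (presumably proved earlier in the paper) and quote it. The analogous computation for $v$ uses the weight $w^{-1}$ with exponents $-\alpha_j$, and the symmetry $|{-\alpha_j}| = |\alpha_j|$ means the same threshold works. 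Granting these inequalities, Gronwall on $\frac{d}{dt}\|u\|_{L_w^2}^2$ and $\frac{d}{dt}\|v\|_{L_{1/w}^2}^2$ gives, for $T$ small depending only on the initial norms, an a priori bound showing the fixed-point map sends $\mathcal{K}$ into itself, and simultaneously yields the $L^\infty_t L^2_{x,w}\cap L^2_t H^1_{x,w}$ regularity claimed.

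For the Schauder hypotheses, continuity of the map follows from linearity in the linear solve plus continuous dependence of $\psi$ on $(\bar u,\bar v)$ in the relevant norms, and compactness from an Aubin–Lions argument: the image lies in $L^2_t H^1_{x,w}$ with time derivative in $L^2_t H^{-1}_{x,w}$, and the weighted embedding $H^1_w(\Omega) \hookrightarrow\hookrightarrow L^2_w(\Omega)$ is compact (again a consequence of the CKN-type inequality away from the critical exponent), so Aubin–Lions gives relative compactness in $L^2_t L^2_{x,w}$; likewise for $v$. Schauder's fixed-point theorem then produces a solution $(u,v,\psi)$. Non-negativity of $u$ and $v$ I would get by using the cutoff in $\mathcal{K}$ consistently — or, more cleanly, by testing the $u$-equation with $u_-= \min(u,0)$ and showing $\|u_-\|_{L_w^2}^2$ satisfies a homogeneous Gronwall inequality, hence vanishes since $u_0\ge 0$; the same for $v$. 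Uniqueness comes from a difference estimate: for two solutions, the difference of potentials is controlled by the difference of $(u,v)$ via the Poisson estimate, and feeding this into the weighted energy identity for the differences $u_1 - u_2$, $v_1 - v_2$ — using exactly the same weighted interpolation inequality, now on the difference — yields a Gronwall inequality forcing the differences to be zero on $[0,T]$. I expect the main obstacle to be the weighted interpolation inequality that absorbs the drift term: getting the numerology to close at $|\alpha_j| < 2\sqrt2 - 2$ rather than merely $|\alpha_j|<2$ requires matching the Hölder exponent coming from $\nabla\psi \in L^{q}$ against the exponent available in the weighted Sobolev embedding, and this balancing is the delicate computational heart of the argument; the parabolic and fixed-point machinery around it is otherwise fairly standard.
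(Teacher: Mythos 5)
Your high-level scheme (Schauder on a linearized weighted parabolic problem, Aubin--Lions for compactness, Gronwall for uniqueness and non-negativity) matches the paper, but the analytical heart of the argument --- the estimate on $\nabla\psi$ that absorbs the drift term --- is wrong as written, and this is precisely the step where the threshold $2\sqrt{2}-2$ must be produced.

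You propose to view $w\bar u - w^{-1}\bar v$ as $L^1$ data and invoke classical $W^{1,q}$ elliptic estimates, concluding $\nabla\psi\in L^q(\Omega)$ for all $q<2$. But this is too weak to close the energy identity. In the term $\int_\Omega w\,u\,\nabla\psi\cdot\nabla u\,dx$, after peeling off $\|w^{1/2}\nabla u\|_{L^2}$ by Cauchy--Schwarz, you must control $\|w^{1/2}u\,\nabla\psi\|_{L^2}$. A two-factor H\"older gives $\|w^{1/2}u\,\nabla\psi\|_{L^2}\le\|w^{1/2}u\|_{L^{q'}}\|\nabla\psi\|_{L^q}$ with $\tfrac1q+\tfrac1{q'}=\tfrac12$; since $q<2$ forces $\tfrac1q>\tfrac12$, one would need $\tfrac1{q'}<0$, which is impossible. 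So the numerology you describe ("$q$ up to $2$'' against "$q'$ slightly above $2$'') simply does not balance, and no unweighted $W^{1,q}$ estimate from $L^1$ data can save it. The paper instead observes the weight-swap identity $\|w\bar u\|_{L^2_{1/w}(\Omega)}=\|\bar u\|_{L^2_w(\Omega)}$, so the equation $\Delta\bar\psi_1=w\bar u$ has data in the weighted space $L^2_{1/w}(\Omega)$; the weighted Calder\'on--Zygmund estimate of Dur\'an--Sanmartino--Toschi (Lemma~2.4) then yields $\bar\psi_1\in H^2_{1/w}(\Omega)$ with norm controlled by $\|\bar u\|_{L^2_w}$. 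Feeding this into the weighted Sobolev embedding (Corollary~2.2) gives $\nabla\bar\psi_1\in L^{2\sqrt2/(\sqrt2-1)}_{1/w}(\Omega)$, and a final H\"older step transfers weight: $\|\nabla\bar\psi_1\|_{L^{2\sqrt2}_w}\le\|\nabla\bar\psi_1\|_{L^{2\sqrt2/(\sqrt2-1)}_{1/w}}\|w\|_{L^{\sqrt2+1}}^{1/2}$. The finiteness of $\|w\|_{L^{\sqrt2+1}}$ and $\|w^{-1}\|_{L^{\sqrt2+1}}$, and equivalently the availability of the weighted Sobolev exponent $2\sqrt2/(\sqrt2-1)$ (one needs $q<\sqrt2$ in Lemma~2.1 to reach $kp>2\sqrt2/(\sqrt2-1)$, i.e.\ $p_0=1+\max_j|\alpha_j|/2<\sqrt2$), is exactly the condition $|\alpha_j|<2\sqrt2-2$. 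The drift term is then estimated by a three-factor H\"older $\|\nabla u\|_{L^2_w}\,\|u\|_{L^{2\sqrt2/(\sqrt2-1)}_w}\,\|\nabla\bar\psi\|_{L^{2\sqrt2}_w}$ with $\tfrac12+\tfrac{\sqrt2-1}{2\sqrt2}+\tfrac1{2\sqrt2}=1$. This weight-swap plus weighted elliptic regularity is the missing idea; without it the threshold cannot be reached and the Gronwall inequality will not close. A secondary point: the paper runs Schauder in $L^{2/\theta}\big((0,T);L^2_w\big)\times L^{2/\theta}\big((0,T);L^2_{1/w}\big)$, not $L^2_t$, because the Gronwall estimate produces $\exp\{C\int_0^T(\|\bar u\|_{L^2_w}^{2/\theta}+\|\bar v\|_{L^2_{1/w}}^{2/\theta})\,dt\}$, which requires integrability of the $2/\theta$ power of the input norms; your choice of the $L^2_t$ norm would not see this quantity.
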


In order to obtain the global existence, it suffices to show that the weighted $L^2$-norms of $u$ and $v$ are bounded in any finite time. A standard argument by using the equations of $u$ and $v$ is to control the integrals
\begin{align}\label{integrals}
\int_\Omega w u \nabla \psi \cdot \nabla u dx \quad \mbox{and} \quad \int_\Omega w^{-1} v \nabla \psi \cdot \nabla v dx
\end{align}
similar to (\ref{id_01})--(\ref{id_02}) for the linearized problem in Section 2. Without permanent charges, the authors in \cite{BHN94} proved the global existence in two dimension by employing the energy law for system (\ref{eqn_n})--(\ref{eqn_phi})
\begin{align*}
&\quad\ \frac{d}{dt} \int_\Omega \left( c_n \log c_n + c_p \log c_p + \frac{1}{2} | \nabla \phi |^2 \right) dx \\
&= -\int_\Omega \left( c_n | \nabla ( \log c_n - \phi ) |^2 + c_p | \nabla ( \log c_p + \phi ) |^2 \right) dx \leq 0.
\end{align*}
The uniform boundedness of $\| c_n \log c_n \|_{L^1(\Omega)}$, $\| c_p \log c_p \|_{L^1(\Omega)}$ and $\| \nabla \phi \|_{L^2(\Omega)}$ is crucial to control the integrals (\ref{integrals}) with $w \equiv 1$. On the other hand, for the case with singular permanent charges, the situation is different. First, there is a similar energy law, but with weight functions in the integrals. Therefore the weighted Sobolev spaces should be employed, while the embedding theorems (cf. Corollary \ref{lem_2_2}) are much weaker than those for standard Sobolev spaces. Second, since there are weights $w$ and $w^{-1}$ on the right-hand side of the Poisson equation (\ref{new_eqn_psi}), more integrability is needed when norms with different weights are alternatively used in elliptic estimates.

Therefore, we assume that weighted norms of the initial data are sufficiently small to allow us to deduce a global-in-time energy estimate. We have the following result for global existence.

\begin{thm}\label{global_wellposedness}
With the same assumptions as in Theorem \ref{local_wellposedness}, there is a positive constant $\epsilon > 0$ such that if, in addition,
\begin{align}\label{smallness_assumption}
\| u_0 \|_{L_w^2(\Omega)}^2 + \| v_0 \|_{L_{1/w}^2(\Omega)}^2 < \epsilon,
\end{align}
then the solution to the system (\ref{eqn_u})--(\ref{initial_psi}) exists globally in time.
\end{thm}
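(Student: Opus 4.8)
The plan is to run a continuation argument: by Theorem \ref{local_wellposedness} a local solution exists on some $[0,T^*)$, and it extends globally provided we can show that $\|u(\cdot,t)\|_{L_w^2(\Omega)}^2 + \|v(\cdot,t)\|_{L_{1/w}^2(\Omega)}^2$ stays bounded on any finite time interval. I would first derive the weighted energy identity by testing (\ref{new_eqn_u}) with $u$ and (\ref{new_eqn_v}) with $v$, which gives
\begin{align*}
\frac{1}{2}\frac{d}{dt}\left(\int_\Omega w u^2\,dx + \int_\Omega w^{-1} v^2\,dx\right) + \int_\Omega w|\nabla u|^2\,dx + \int_\Omega w^{-1}|\nabla v|^2\,dx = \int_\Omega w u\,\nabla\psi\cdot\nabla u\,dx - \int_\Omega w^{-1} v\,\nabla\psi\cdot\nabla v\,dx.
\end{align*}
The goal is to absorb the right-hand side into the good dissipation terms on the left, up to a term that is controlled by the energy itself and that is small when the energy is small.

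The core estimate is the bound on $\int_\Omega w u\,\nabla\psi\cdot\nabla u\,dx$ (and the analogous $v$-term). I would write $w u \nabla u = \frac12 \nabla(w u^2) - \frac12 u^2 \nabla w$, integrate by parts, and use (\ref{new_eqn_psi}) to rewrite $\Delta\psi = w u - w^{-1} v$; this reduces the drift term to integrals of the form $\int w^2 u^3$, $\int u^2 v$, and lower-order pieces, which must be controlled by the weighted Sobolev embeddings in Corollary \ref{lem_2_2}. The condition $|\alpha_j| < 2\sqrt2 - 2$ is exactly what makes the relevant weighted spaces $H_w^1$, $H_{1/w}^1$ embed into the weighted $L^p$ spaces with enough room; one then uses a weighted Gagliardo--Nirenberg-type inequality to interpolate $\|u\|_{L_w^4}$ (or the appropriate exponent) between $\|u\|_{L_w^2}$ and $\|\nabla u\|_{L_w^2}$. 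Because elliptic estimates for $\psi$ mix the two weights $w$ and $w^{-1}$ on the right-hand side of (\ref{new_eqn_psi}), extra integrability is needed, and I would handle this by first getting an $L^\infty$ or $W^{1,q}$ bound on $\psi$ in terms of $\|w u\|_{L^r} + \|w^{-1} v\|_{L^r}$ for a suitable $r > 1$, then converting back to weighted norms of $u,v$. Collecting everything, the energy inequality should take the schematic form
\begin{align*}
\frac{d}{dt}\,E(t) + D(t) \;\leq\; C\,E(t)^{1/2}\,D(t) + C\,E(t)\,D(t) + C\,E(t),
\end{align*}
where $E(t)$ is the weighted $L^2$-energy and $D(t)$ the weighted $H^1$-dissipation.

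From such an inequality the smallness closes the argument: choosing $\epsilon$ so that $C E(0)^{1/2} + C E(0) \leq \tfrac12$ and using a bootstrap/continuity argument, the quadratic-in-$E$ and cubic terms are absorbed by $\tfrac12 D(t)$ as long as $E(t) \leq 2E(0)$, leaving $\frac{d}{dt}E(t) + \tfrac12 D(t) \leq C E(t)$, hence $E(t) \leq E(0) e^{Ct}$, which is finite on every finite interval; a standard open--closed argument on the maximal time of validity of $E(t) \leq 2 E(0) e^{Ct}$ then shows the solution never blows up, so $T^* = \infty$. Non-negativity of $u,v$ is inherited from Theorem \ref{local_wellposedness} on each subinterval. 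The main obstacle I expect is the drift estimate for $\psi$: because (\ref{new_eqn_psi}) has both $w$ and $w^{-1}$ on the right, and these weights are simultaneously singular and degenerate at the $x_j$, one must carefully track which weighted $L^p$ norm of $\psi$ and $\nabla\psi$ is available, and verify that the weighted embedding exponents coming from $|\alpha_j| < 2\sqrt2 - 2$ leave enough margin to close the interpolation — this is where the precise numerical constraint on $\alpha_j$ is consumed.
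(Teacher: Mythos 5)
There is a genuine gap in the closing argument of your proposal. Your schematic inequality has the form
\begin{align*}
\frac{d}{dt}E(t) + D(t) \leq C\,E(t)^{1/2}D(t) + C\,E(t)\,D(t) + C\,E(t),
\end{align*}
and you absorb the $E^{1/2}D$ and $ED$ terms into $\tfrac12 D$ under the bootstrap hypothesis $E(t) \leq 2E(0)$, arriving at $\frac{d}{dt}E \leq CE$ and hence $E(t) \leq E(0)e^{Ct}$. But exponential growth in $E$ means the bootstrap hypothesis $E(t) \leq 2E(0)$ fails at time $t^* = (\ln 2)/C$, and once $E$ exceeds the threshold the $E^{1/2}D$ and $ED$ terms can no longer be absorbed, so the ODE inequality gives nothing beyond $t^*$. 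The ``open--closed argument on the maximal time of validity of $E(t) \leq 2E(0)e^{Ct}$'' does not repair this: that bound is not the one you used to absorb the dissipation, and the absorption constant degrades as soon as $E$ grows. Without some mechanism that actually keeps $E(t)$ small for all time, a smallness assumption on $E(0)$ alone cannot prevent eventual blowup of a Riccati-type inequality such as the one you derive.

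The ingredient you are missing is the paper's use of the weighted Poincar\'e inequality (Lemma \ref{lem_2_1}) combined with conservation of mass. Testing the weak formulation with $\xi \equiv 1$, $\eta \equiv 1$ shows that $\|u(\cdot,t)\|_{L^1_w(\Omega)}$ and $\|v(\cdot,t)\|_{L^1_{1/w}(\Omega)}$ are conserved and hence controlled by $\epsilon$. The Poincar\'e inequality then gives
\begin{align*}
\|\nabla u\|_{L^2_w}^2 + \|\nabla v\|_{L^2_{1/w}}^2 \geq \frac{1}{C}\left(\|u\|_{L^2_w}^2 + \|v\|_{L^2_{1/w}}^2\right) - C\epsilon_1,
\end{align*}
so the dissipation $D$ on the left of the energy inequality converts into a strictly negative linear term $-\frac{1}{C}H$ plus a small constant, where $H(t) := \|u\|_{L^2_w}^2 + \|v\|_{L^2_{1/w}}^2$. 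This turns the ODE inequality into
\begin{align*}
\frac{d}{dt}H \leq -\frac{1}{C}H + C\left(H^{1+1/\theta} + H^2 + \epsilon_1\right) =: \Gamma(H),
\end{align*}
and for $\epsilon_1$ small the polynomial $\Gamma$ is negative on a nontrivial interval $(0,\epsilon_2)$, so $\{H < \epsilon\}$ with $\epsilon = \min\{\epsilon_1,\epsilon_2\}$ is a genuine invariant region: $H(t) < \epsilon$ for all $t \geq 0$. This uniform-in-time bound, not exponential growth, is what allows the continuation argument to run indefinitely. The negative feedback term $-\frac{1}{C}H$ coming from Poincar\'e plus mass conservation is the essential structural feature; a pure absorption-and-Gronwall scheme such as yours cannot produce it. (Your alternative integration-by-parts treatment of the drift term, producing $\int w^2 u^3$ etc., is also riskier than the paper's direct H\"older estimate because the weight $w^2$ is more singular, but that is secondary to the closing issue above.)
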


\begin{rmk}
In the original formulation, the smallness assumption (\ref{smallness_assumption}) is equivalent to
\begin{align*}
\| c_{n,0} \|_{L_{1/w}^2(\Omega)}^2 + \| c_{p,0} \|_{L_w^2(\Omega)}^2 < \epsilon.
\end{align*}
Notice that $w$ approaches infinity at locations of positive point charges and goes to zero at locations of negative point charges. Thus, anions can be more concentrated near positive point charges, and should be less concentrated near negative point charges. Similarly, an opposite conclusion holds for cations.
\end{rmk}

\subsection{Remarks and discussion}

\begin{itemize}
\item[(i)] If we apply a similar reformulation as in Section \ref{sec_1_1} for higher dimensional cases $\Omega \subset \mathbb{R}^d$, $d \geq 3$, the resulting system is similar to (\ref{eqn_u})--(\ref{initial_psi}), but with much more singular weight functions with exponential singularities. Therefore, the method we used in this article cannot work for higher dimensional cases.
\item[(ii)] In this article, all physical coefficients are scaled to be one in the PNP system for simplicity. The original PNP model with permanent charge (\ref{singular_charge}) can be represented as
\begin{align}
\partial_t c_n &= D_n \nabla \cdot \left( \nabla c_n - \frac{z_n e}{k_B \mathcal{T}} c_n \nabla \phi \right), \label{original_eqn_1}\\
\partial_t c_p &= D_p \nabla \cdot \left( \nabla c_p + \frac{z_p e}{k_B \mathcal{T}} c_p \nabla \phi \right), \label{original_eqn_2}\\
\varepsilon \Delta \phi &= z_n e c_n - z_p e c_p + 2\pi \sum_{j=1}^m \alpha_j \delta_{x_j}, \label{original_eqn_3}
\end{align}
where $D_n$, $D_p$, $z_n$, $z_p$, $k_B$, $\mathcal{T}$ and $\varepsilon$ are the diffusion coefficient of anions, the diffusion coefficient of cations, the valence of anions, the valence of cations, the Boltzmann constant, the temperature and the dielectric constant, respectively. One can see that, if we consider the original PNP model (\ref{original_eqn_1})--(\ref{original_eqn_3}) with all these physical coefficients, the upper bound $2\sqrt{2} - 2$ for $a_j$'s in Theorem \ref{local_wellposedness} will become $(2\sqrt{2} - 2)\varepsilon$.
\item[(iii)] Because of the presence of singular permanent charges, it is not straightforward to define weak solutions to the problem (\ref{eqn_n})--(\ref{BC_phi}). Instead of adopting the notion of solutions obtained as limits of approximations as in \cite{BN97}, we define the solutions by considering a reformulated problem in suitable weighted spaces. In view of Remark \ref{rmk_1}, one might expect that the assumption in Theorem \ref{local_wellposedness} on $\alpha_j$'s can be weakened to be $|\alpha_j| \leq 2$, $j = 1, ..., m$. However, there are some limitations of weighted spaces. First, the weighted embeddings are worse than the standard ones. Also, in order to estimate weighted norms of $\psi$, we lose some integrability because $u$ and $v$ are multiplied by different weight functions on the right-hand side of the Poisson equation (\ref{new_eqn_psi}). Due to these limitations, we can only prove the existence result under the assumption $|\alpha_j| < 2\sqrt{2} - 2$, $j = 1, ..., m$.
\end{itemize}

The rest of this paper is organized as follows. In Section \ref{sec_2_1}, we will introduce the weighted spaces as well as their properties employed to study the system (\ref{eqn_u})--(\ref{initial_psi}). Then we prove Theorem \ref{local_wellposedness} in Section \ref{sec_2_2}. Finally, Theorem \ref{global_wellposedness} will be proved in Section \ref{sec_3}. We will use $C$ to denote generic constants that are independent of particular functions appearing in inequalities and write $A \lesssim B$ if $A \leq CB$.

\section{Local well-posedness}

In this section, we will prove Theorem \ref{local_wellposedness}. The local existence can be established by Schauder's fixed-point theorem. Before we prove the existence result, we introduce the weighted spaces and some properties we will use.

\subsection{Weighted Sobolev spaces}\label{sec_2_1}

First, we introduce the Muckenhoupt $A_p$ weights. In $\mathbb{R}^d$, $d \in \mathbb{N}$, for $1 < p < \infty$, the class $A_p = A_p(\mathbb{R}^d)$ of weight functions consists of those weights $\mu > 0$, $\mu \in L^1_{\mathrm{loc}}(\mathbb{R}^d)$ such that
\begin{align*}
\left( \frac{1}{|B|} \int_B \mu dx \right) \left( \frac{1}{|B|} \int_B \mu^{-\frac{1}{p-1}} dx \right)^{p-1} \leq C
\end{align*}
for all balls $B \subset \mathbb{R}^d$ for some positive constant $C$. It is first introduced by Muckenhoupt \cite{M72,M74} (see also monographs \cite{HKM06} and \cite{S93}) that the class consists of those weights $\mu$ such that the maximal operator is bounded in $L^p_\mu(\mathbb{R}^d)$. One can verify by using the definition that
\begin{align*}
A_q \subset A_p
\end{align*}
for all $1 < q \leq p$.

A basic example of $A_p$ weights is the power weight $|x|^\alpha$. We have $|x|^\alpha \in A_p$ if and only if $-d < \alpha < d(p-1)$. For the weights $w$ and $1/w$ defined by (\ref{def_w}), we may extend the domain of $w$ to the whole space $\mathbb{R}^2$ by assuming $w = 1$ on $\Omega^c$. Notice that, under the assumptions in Theorem \ref{local_wellposedness}, singularities and zeros of $w$ and $1/w$ have orders $|x|^\alpha$ with $|\alpha| < 2\sqrt{2} - 2$. It is easy to check that $w$ and $1/w$ belong to $A_p$ class for all $p > 2\sqrt{2} - 2$.

In the following, we consider weighted Sobolev spaces with $A_p$ weights. Theory of weighted Sobolev spaces with $A_p$ weights has been studied by mathematicians. In the pioneer work \cite{FKS82}, the authors proved the Poincar\'{e} inequality for weighted Sobolev spaces in balls as follows. Given a ball $B \subset \mathbb{R}^d$, there exist positive constants $C$ and $\delta$ such that
\begin{align}\label{poincare_1}
\| h - h_B \|_{L^{kp}_\mu(B)} \leq C \| \nabla h \|_{L^p_\mu(B)}
\end{align}
for all $h \in W^{1,p}_\mu(B)$ and $1 \leq k \leq \dfrac{n}{n-1} + \delta$, where
\begin{align*}
h_B = \frac{1}{\mu(B)} \int_B u(x) \mu(x) dx.
\end{align*}
The relation between the constant $k$ and the weight function $\mu$ in (\ref{poincare_1}) can be given more precisely. That is,

\begin{lem}\label{lem_2_1}
Let $E \subset \mathbb{R}^d$ be a bounded domain with smooth boundary. Suppose that $\mu \in A_p$ and let $1 \leq p_0 < q < p < dq$ and $k = d / ( d - p/q ) > 1$, where
\begin{align*}
p_0 = \inf_{p > 1} \left\{ \mu \in A_p \right\}.
\end{align*}
Then there exists a positive constant $C$ such that
\begin{align}\label{poincare_ineq}
\| h - h_E \|_{L^{kp}_\mu(E)} \leq C \| \nabla h \|_{L^p_\mu(E)}
\end{align}
for all $h \in W^{1,p}_\mu(E)$, where
\begin{align*}
h_E = \frac{1}{\mu(E)} \int_E h(x) \mu(x) dx.
\end{align*}
\end{lem}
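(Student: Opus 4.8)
The plan is to deduce Lemma \ref{lem_2_1} from the ball version \eqref{poincare_1} together with the standard machinery for Muckenhoupt weights on domains. First I would note the scaling of the exponent: with $q<p<dq$ and $k=d/(d-p/q)$, one checks $1<k<\tfrac{d}{d-1}\cdot\tfrac{q}{?}$... more precisely, the exponent $kp$ on the left is exactly $\tfrac{dpq}{dq-p}$, so the pair $(p,kp)$ is the $A_p$-analogue of a Sobolev pair, and the condition $p<dq$ is what keeps $kp$ finite and positive. I would record once and for all that $\mu\in A_p$ with $p>p_0$ guarantees $\mu\in A_r$ for $r$ slightly below $p$, hence a small gain $\delta$ in \eqref{poincare_1}; this is where the strict inequality $p_0<q<p$ is used.

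Next I would pass from the ball to the bounded smooth domain $E$. The standard route is a chaining / covering argument: cover $\overline E$ by finitely many balls $B_1,\dots,B_N$ (with suitable dilates contained in a fixed neighborhood of $E$), apply \eqref{poincare_1} on each $B_i$, and patch the local estimates together using the connectedness and the Lipschitz (indeed smooth) character of $\partial E$. Equivalently, one can invoke the known fact that on a bounded John domain — in particular a bounded $C^1$ domain — the weighted Poincaré inequality $\|h-h_E\|_{L^{kp}_\mu(E)}\le C\|\nabla h\|_{L^p_\mu(E)}$ holds for $\mu\in A_p$ with the same admissible range of $k$ as on balls; this is due to the Boman chain condition for John domains. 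I would then replace the ball-average $h_{B_i}$ by the domain-average $h_E$, absorbing the difference $|h_{B_i}-h_E|$ into the gradient term via a finite telescoping over overlapping balls and Hölder's inequality in $L^p_\mu$, using $\mu(E)<\infty$ and $\mu(B_i)>0$ (both consequences of $\mu\in L^1_{\mathrm{loc}}$ and the $A_p$ doubling property).

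The genuinely delicate point — and the one I would flag as the main obstacle — is matching the exponent $k=d/(d-p/q)$ to the admissible range $1\le k\le \tfrac{d}{d-1}+\delta$ in \eqref{poincare_1}. A direct substitution shows $d/(d-p/q)\le d/(d-1)+\delta$ is \emph{not} automatic for all $q<p<dq$; the inequality $p/q$ playing the role of ``$1$'' only works once one has upgraded integrability by an interpolation/self-improvement step. The fix is to use that $\mu\in A_p$ actually yields a \emph{stronger} weighted Sobolev embedding than the bare $\tfrac{d}{d-1}$-gain: by the Fabes–Kenig–Serapioni theory the exponent can be pushed up to $kp$ with $k$ as in the statement, because the relevant representation formula $|h(x)-h_B|\lesssim \int_B |x-y|^{1-d}|\nabla h(y)|\,dy$ combined with the boundedness of the fractional integral $I_1$ on weighted Lebesgue spaces $L^p_\mu\to L^{kp}_\mu$ — valid precisely when $\mu\in A_p$ and $\tfrac1{kp}=\tfrac1p-\tfrac1{d}\cdot\tfrac{1}{?}$, i.e. the weighted Sobolev exponent determined by $p$ and the ``dimension'' seen by $\mu$ — gives exactly the stated $k$. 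So the real content of the lemma is the weighted boundedness of the Riesz potential with $A_p$ weights, and I would isolate that as the key step, then combine it with the domain-chaining described above to conclude.

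In summary, the structure is: (1) record $A_p$ self-improvement so $\mu\in A_r$ for some $r<p$; (2) establish the pointwise potential bound $|h-h_B|\lesssim I_1(|\nabla h|\mathbf 1_B)$ on balls and apply weighted $L^p_\mu\to L^{kp}_\mu$ boundedness of $I_1$ to get \eqref{poincare_1} with the sharp $k$; (3) chain over a finite cover of the smooth domain $E$ (Boman/John-domain argument) and swap ball-averages for the global average $h_E$, absorbing errors into $\|\nabla h\|_{L^p_\mu(E)}$. The hard part is step (2) — identifying the correct weighted Sobolev exponent $kp=\tfrac{dpq}{dq-p}$ and verifying the Riesz-potential bound in the weighted setting; everything else is routine once that is in hand.
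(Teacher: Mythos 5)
Your proposal is mathematically sound, and you correctly identify the key technical point: the ball inequality (\ref{poincare_1}), with only the $\tfrac{d}{d-1}+\delta$ gain, is not strong enough to give the exponent $k = d/(d-p/q)$, and one must instead use the sharper weighted Sobolev--Poincar\'{e} inequality on balls, which ultimately rests on the $L^p_\mu \to L^{kp}_\mu$ boundedness of the Riesz potential $I_1$ for $A_p$ weights. This is precisely why the paper invokes Theorem~15.26 of \cite{HKM06} to settle the ball case rather than appealing to (\ref{poincare_1}) directly. Where you diverge from the paper is in passing from balls to the bounded smooth domain $E$: you propose a chaining/covering argument (a Boman chain over a finite cover, with the average swap handled by telescoping over overlapping balls and H\"{o}lder in $L^p_\mu$). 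The paper instead enlarges $E$ to a large ball $B\supset E$, extends $\mu$ by $1$ on $B\setminus E$, invokes Chua's extension theorem for weighted Sobolev spaces (Theorem~1.4 in \cite{C92}) to produce $\Lambda h \in W^{1,p}_\mu(B)$ with $\|\nabla(\Lambda h)\|_{L^p_\mu(B)} \lesssim \|\nabla h\|_{L^p_\mu(E)}$, applies the sharp ball inequality on $B$, and then replaces $(\Lambda h)_B$ by $h_E$ via the elementary fact $\|h-h_E\|_{L^{kp}_\mu(E)} \leq 2\inf_{c\in\mathbb{R}} \|h-c\|_{L^{kp}_\mu(E)}$ (Lemma~1 of \cite{R19}). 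Both routes are valid. The chaining argument is the more classical technique and extends to the broader class of John domains, at the cost of the bookkeeping you describe for swapping averages across the chain; the extension-operator route is shorter here because it reduces the whole problem to a single ball, and it is available precisely because $E$ is smooth (hence a weighted Sobolev extension domain in the sense of Chua).
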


\begin{proof}
By Theorem 15.26 of \cite{HKM06}, the inequality (\ref{poincare_ineq}) holds if $E$ is a ball. Now, we choose a sufficiently large ball $B$ such that $E \subset B$. By the extension theorem for weighted Sobolev spaces (see Theorem 1.4 in \cite{C92}), there exists an extension operator $\Lambda : W_\mu^{1,p}(E) \rightarrow W_\mu^{1,p}(B)$, such that
\begin{align*}
\| \Lambda h \|_{L_\mu^p(B)} \leq C \| h \|_{L_\mu^p(E)} \quad \mbox{and} \quad \| \nabla (\Lambda h) \|_{L_\mu^p(B)} \leq C \| \nabla h \|_{L_\mu^p(E)} 
\end{align*}
for any $h \in W_\mu^{1,p}(E)$. Here $\mu$ is extend to be $1$ on $B \backslash E$. Combining Lemma 1 in \cite{R19}, the extension theorem and inequality (\ref{poincare_ineq}) for balls, we have
\begin{align*}
\| h - h_E \|_{L^{kp}_\mu(E)} &\leq 2 \inf_{c \in \mathbb{R}} \| h - c \|_{L^{kp}_\mu(E)} \leq 2 \| \Lambda h - (\Lambda h)_B \|_{L^{kp}_\mu(B)} \\
&\leq C \| \nabla (\Lambda h) \|_{L^p_\mu(B)} \leq C \| \nabla h \|_{L_\mu^p(E)}.
\end{align*}
\end{proof}

A direct consequence of Lemma \ref{lem_2_1} is the Sobolev inequality.

\begin{cor}\label{lem_2_2}
Under the same assumptions of Lemma \ref{lem_2_1}, there exists a positive constant $C$ such that
\begin{align*}
\| h \|_{L^{kp}_\mu(E)} \leq C \| h \|_{W^{1,p}_\mu(E)}
\end{align*}
for all $h \in W^{1,p}_\mu(E)$.
\end{cor}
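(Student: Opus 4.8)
The statement to prove is Corollary \ref{lem_2_2}, the Sobolev inequality $\|h\|_{L^{kp}_\mu(E)} \le C\|h\|_{W^{1,p}_\mu(E)}$, deduced from the weighted Poincaré inequality of Lemma \ref{lem_2_1}.

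The plan is to split $h$ into its weighted average $h_E$ and the oscillation $h - h_E$, and bound each piece separately in $L^{kp}_\mu(E)$. For the oscillation part, Lemma \ref{lem_2_1} applies directly: $\|h - h_E\|_{L^{kp}_\mu(E)} \le C\|\nabla h\|_{L^p_\mu(E)} \le C\|h\|_{W^{1,p}_\mu(E)}$. So the only remaining task is to control the constant term $h_E$ in the $L^{kp}_\mu$ norm. First I would observe that $\|h_E\|_{L^{kp}_\mu(E)} = |h_E|\,\mu(E)^{1/(kp)}$, and since $E$ is bounded and $\mu \in A_p$ is locally integrable, $\mu(E) < \infty$, so this is a finite constant times $|h_E|$.

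Next I would estimate $|h_E|$. By definition $h_E = \mu(E)^{-1}\int_E h\,\mu\,dx$, so by Hölder's inequality with exponents $p$ and $p' = p/(p-1)$,
\begin{align*}
|h_E| \le \frac{1}{\mu(E)} \left(\int_E |h|^p \mu\,dx\right)^{1/p} \left(\int_E \mu\,dx\right)^{1/p'} = \mu(E)^{-1/p}\,\|h\|_{L^p_\mu(E)}.
\end{align*}
Combining, $\|h_E\|_{L^{kp}_\mu(E)} \le \mu(E)^{1/(kp) - 1/p}\,\|h\|_{L^p_\mu(E)} \le C\|h\|_{W^{1,p}_\mu(E)}$, where $C$ depends only on $\mu(E)$, $k$, $p$ (and $k > 1$ makes the exponent $1/(kp) - 1/p$ negative, but since $\mu(E)$ is a fixed finite positive number this is harmless). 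Then by the triangle inequality $\|h\|_{L^{kp}_\mu(E)} \le \|h - h_E\|_{L^{kp}_\mu(E)} + \|h_E\|_{L^{kp}_\mu(E)} \le C\|h\|_{W^{1,p}_\mu(E)}$, which is exactly the claim.

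There is no real obstacle here — it is the standard two-line deduction of a Sobolev embedding from a Poincaré inequality, using only that the domain has finite $\mu$-measure (which holds since $\mu$ is locally integrable and $E$ is bounded). The only point requiring a moment's care is that the average $h_E$ is taken with respect to the measure $\mu\,dx$ rather than Lebesgue measure, so Hölder must be applied in the weighted sense; and one should note $\mu(E) > 0$ since $\mu > 0$ a.e., so that division by $\mu(E)$ and the negative power $\mu(E)^{1/(kp)-1/p}$ are both well-defined. Everything else is immediate from Lemma \ref{lem_2_1}.
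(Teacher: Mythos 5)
Your proof is correct and is exactly the standard two-step derivation the paper leaves implicit when it calls the corollary a ``direct consequence'' of Lemma \ref{lem_2_1}: split off the weighted mean $h_E$, control the oscillation by the Poincar\'e inequality, and control the constant $h_E$ by H\"older in the weighted measure using $0 < \mu(E) < \infty$. Nothing further needs to be said.
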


Recall that $w , 1/w \in A_q$ for all $q > 2\sqrt{2} - 2$. By applying Corollary \ref{lem_2_2} with $\mu = w , 1/w$ and $p = 2$, we can choose $q$ greater than and sufficiently close to $2\sqrt{2} - 2$ to conclude that there is $\sigma > 0$ such that
\begin{align}\label{sobolev_ineq}
\| h \|_{L^{2\sqrt{2} / (\sqrt{2} - 1) + \sigma}_\mu(\Omega)} \leq C \| h \|_{H^1_\mu(\Omega)}
\end{align}
for all $h \in H^1_\mu(\Omega)$, where $\mu = w$ or $1/w$. By using an interpolation inequality and (\ref{sobolev_ineq}), we have
\begin{align}
\| h \|_{L^{2\sqrt{2} / (\sqrt{2} - 1)}_\mu(\Omega)} &\leq \| h \|_{L^{2\sqrt{2} / (\sqrt{2} - 1) + \sigma}_\mu(\Omega)}^{1-\theta} \| h \|_{L^2_\mu(\Omega)}^\theta \notag\\
&\leq C \| h \|_{H^1_\mu(\Omega)}^{1-\theta} \| h \|_{L^2_\mu(\Omega)}^\theta \label{interpolation}
\end{align}
for all $h \in H^1_\mu(\Omega)$, $\mu = w$ or $1/w$. Here $\theta \in (0,1)$ satisfies
\begin{align}\label{def_theta}
\dfrac{\sqrt{2} - 1}{2\sqrt{2}} = \dfrac{1 - \theta}{\dfrac{2\sqrt{2}}{\sqrt{2} - 1} + \sigma} + \dfrac{\theta}{2}.
\end{align}

Moreover, compact embeddings are needed in order to apply the Aubin-Lions lemma. The compact embedding theorem for weighted Sobolev spaces with $A_p$ weights is studied in \cite{F00}. The author proved the following result.

\begin{lem}\label{lem_2_3}
Let $E \subset \mathbb{R}^d$ be a bounded smooth domain. If $\mu \in A_p$, $1 < p < \infty$, then we have the compact embedding
\begin{align*}
W^{1,p}_\mu(E) \subset \subset L^p_\mu(E).
\end{align*}
\end{lem}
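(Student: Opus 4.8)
The plan is to deduce the weighted compact embedding from the classical (unweighted) Rellich--Kondrachov theorem together with an equi-integrability argument with respect to the finite measure $\mu\,dx$. The two structural facts about $A_p$ weights that make this work are: (a) the dual weight $\mu^{-1/(p-1)}$ is integrable on bounded sets, which gives weighted Sobolev functions an unweighted $W^{1,1}$ bound; and (b) weighted Sobolev functions enjoy a small self-improvement of integrability, $W^{1,p}_\mu(E)\hookrightarrow L^{kp}_\mu(E)$ with $k>1$, which forces uniform integrability of $\{|h_n|^p\}$ for a bounded sequence $\{h_n\}$.

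Concretely I would proceed as follows. First, show $W^{1,p}_\mu(E)\hookrightarrow W^{1,1}(E)$ continuously: choosing a ball $B\supset E$, the $A_p$ condition on $B$ gives $\int_E \mu^{-1/(p-1)}\,dx\le\int_B\mu^{-1/(p-1)}\,dx<\infty$ (and also $\int_E\mu\,dx<\infty$ since $\mu\in L^1_{\mathrm{loc}}$), so H\"older's inequality with exponents $p$ and $p/(p-1)$ bounds $\|h\|_{L^1(E)}$ and $\|\nabla h\|_{L^1(E)}$ by a constant times $\|h\|_{L^p_\mu(E)}$ and $\|\nabla h\|_{L^p_\mu(E)}$ respectively. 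Next, take $\{h_n\}$ bounded in $W^{1,p}_\mu(E)$; it is then bounded in $W^{1,1}(E)$, so Rellich--Kondrachov on the bounded smooth domain $E$ extracts a subsequence (not relabelled) converging in $L^1(E)$, hence, after passing to a further subsequence, converging a.e.\ in $E$ to some $h$; since $\mu>0$ a.e., this is also convergence $\mu$-a.e. Then I would invoke the weighted Sobolev inequality of Corollary~\ref{lem_2_2} (its hypotheses being met by choosing $q$ as in Lemma~\ref{lem_2_1}, using that $\mu\in A_p$ is an open condition so $p_0<p$), which supplies some $k>1$ with $\{h_n\}$ bounded in $L^{kp}_\mu(E)$; as $\mu(E)<\infty$, this makes $\{|h_n|^p\}$ uniformly integrable with respect to $\mu\,dx$, and Fatou's lemma gives $h\in L^p_\mu(E)$. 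Finally, Vitali's convergence theorem applied to the finite measure $\mu\,dx$ upgrades the $\mu$-a.e.\ convergence to $h_n\to h$ in $L^p_\mu(E)$. Since every bounded sequence thus has a subsequence converging in $L^p_\mu(E)$, the embedding is compact.

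The step I expect to carry the real weight is the gain of integrability: it rests on the Fabes--Kenig--Serapioni Poincar\'e inequality for $A_p$ weights (\cite{FKS82}, built into Lemma~\ref{lem_2_1}) and on the self-improving (reverse H\"older) property of the $A_p$ classes; without an exponent $k>1$ one cannot close the equi-integrability argument, since boundedness in $L^p_\mu(E)$ alone does not rule out concentration of $\mu$-mass. The remaining points are routine, though one should keep in mind that Rellich--Kondrachov for $W^{1,1}$ requires the domain to be Lipschitz, which holds here as $E$ is assumed smooth, and that the higher-integrability input is used in the dimensions relevant here ($d\ge 2$). An essentially equivalent alternative is to apply the Riesz--Fr\'echet--Kolmogorov criterion directly in $L^p_\mu(E)$, estimating $\mu$-weighted translates via the weighted Poincar\'e inequality on small balls; I find the $W^{1,1}$-reduction cleaner because it pinpoints exactly where the $A_p$ hypothesis enters.
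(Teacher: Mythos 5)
The paper does not supply a proof of this lemma; it attributes it to Fr\"ohlich \cite{F00} and moves on, so there is no in-paper argument to compare against. Your proposal is a correct, self-contained proof (given Lemma~\ref{lem_2_1} and Corollary~\ref{lem_2_2}) along classical lines, and it correctly isolates the two places where the $A_p$ hypothesis does work. The continuous embedding $W^{1,p}_\mu(E)\hookrightarrow W^{1,1}(E)$, which hinges on $\int_E\mu^{-1/(p-1)}\,dx<\infty$ (read off from the $A_p$ condition on a ball containing $E$) and on $\mu\in L^1_{\mathrm{loc}}$, feeds Rellich--Kondrachov and yields an a.e.\ convergent subsequence; the strict gain of exponent $W^{1,p}_\mu(E)\hookrightarrow L^{kp}_\mu(E)$ with $k>1$ from Corollary~\ref{lem_2_2} gives $\mu$-equi-integrability of $\{|h_n|^p\}$ via H\"older, and Vitali's theorem on the finite measure space $(E,\mu\,dx)$ then upgrades to convergence in $L^p_\mu(E)$. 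Your verification that Corollary~\ref{lem_2_2} is applicable --- namely that an admissible $q$ with $p_0<q<p<dq$ exists, which uses the openness of the $A_p$ classes (so $p_0<p$) together with $d\ge 2$ (so $p/d<p$) --- is correct, and you appropriately flag that the argument as written excludes $d=1$; since the paper only invokes the lemma with $d=2$, this restriction is immaterial. The one cosmetic redundancy is the Fatou step: the Vitali convergence theorem already yields $h\in L^p_\mu(E)$ together with the norm convergence, so no separate check is needed.
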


Finally, noticing that there are weight functions on the right-hand side of the Poisson equation (\ref{new_eqn_psi}), we will employ the weighted elliptic estimate given in \cite{DST08} to estimate $\psi$ in terms of $u$ and $v$. That is,

\begin{lem}\label{lem_2_4}
Let $E \subset \mathbb{R}^d$ be a bounded smooth domain. If $h$ is the solution of the Poisson equation
\begin{align*}
\begin{cases}
\Delta h = f \quad & \mbox{in } E, \\[3mm]
h = 0 \quad & \mbox{on } \partial E,
\end{cases}
\end{align*}
where $f \in L^p_w(E)$ and $w \in A_p$ for some $1 < p < \infty$, then there exists a positive constant $C$ depending only on $E$ and $w$ such that
\begin{align*}
\| h \|_{W^{2,p}_w(E)} \leq C \| f \|_{L^p_w(E)}.
\end{align*}
\end{lem}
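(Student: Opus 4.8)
The plan is to recognize the claimed bound as the weighted $L^p$ Calder\'on--Zygmund estimate for the Dirichlet Laplacian, established in \cite{DST08}, and to reduce it by localization to the whole-space case, where the boundedness of Calder\'on--Zygmund singular integrals on $L^p_\mu$ for Muckenhoupt weights $\mu \in A_p$ does the work. Concretely, when $E = \mathbb{R}^d$ and $h = \Gamma * f$ with $\Gamma$ the fundamental solution of $\Delta$, one has $\partial_i\partial_j h = c_{ij} f + \mathrm{p.v.}\,(K_{ij} * f)$ with $K_{ij} = \partial_i\partial_j \Gamma$ a kernel that is smooth off the origin, homogeneous of degree $-d$, and of vanishing mean on spheres; that is, $f \mapsto D^2 h$ is a Calder\'on--Zygmund operator. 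By the Hunt--Muckenhoupt--Wheeden and Coifman--Fefferman theory of weighted norm inequalities, such operators are bounded on $L^p_\mu(\mathbb{R}^d)$ for every $\mu \in A_p$, $1<p<\infty$, which yields $\|D^2 h\|_{L^p_\mu(\mathbb{R}^d)} \lesssim \|f\|_{L^p_\mu(\mathbb{R}^d)}$.

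For the bounded smooth domain $E$, the next step is a partition of unity subordinate to a finite cover of $\overline E$ by balls, some interior and some centered on $\partial E$. On an interior ball with cutoff $\zeta$, the function $\zeta h$ solves, on all of $\mathbb{R}^d$, $\Delta(\zeta h) = \zeta f + 2\nabla\zeta\cdot\nabla h + (\Delta\zeta) h$, so the whole-space bound gives $\|D^2(\zeta h)\|_{L^p_w} \lesssim \|f\|_{L^p_w} + \|\nabla h\|_{L^p_w} + \|h\|_{L^p_w}$. On a boundary ball one would first flatten $\partial E$ by a smooth diffeomorphism, turning the equation into a second-order uniformly elliptic equation with smooth coefficients on a half-ball with zero Dirichlet data on the flat face; freezing the coefficients at the center and treating the remainder as a small perturbation (shrinking the ball), together with an odd reflection of $h$ across the flat face, reduces this once more to the interior/whole-space estimate. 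Summing the localized estimates produces $\|D^2 h\|_{L^p_w(E)} \lesssim \|f\|_{L^p_w(E)} + \|h\|_{W^{1,p}_w(E)}$, after which the lower-order terms $\|h\|_{L^p_w(E)}$ and $\|\nabla h\|_{L^p_w(E)}$ are absorbed by a routine argument, for instance a contradiction--compactness argument based on the compact embedding of Lemma \ref{lem_2_3} (applied to the derivatives) together with uniqueness for the weighted Dirichlet problem.

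I expect the boundary estimate to be the one genuinely delicate point, since it is there that the weight interacts with the geometric operations. One has to check that a bi-Lipschitz change of variables sends $A_p$ weights to $A_p$ weights with comparable characteristic constant, and that the even reflection across a hyperplane of the weight associated with the odd reflection of $h$ is again $A_p$; both are true, and since $w$ and $1/w$ (extended by $1$ off $\Omega$) belong to $A_p$ for the relevant range of $p$, as recorded in Section \ref{sec_2_1}, the flattening/reflection/perturbation scheme goes through in the weighted setting exactly as in the classical unweighted one. The complete verification of these points is carried out in \cite{DST08}, which we invoke for the full proof.
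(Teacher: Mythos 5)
The paper does not supply a proof of Lemma~\ref{lem_2_4}: it records the statement as a known a priori estimate and cites \cite{DST08} for it. Your proposal is therefore consistent with the paper, since you too ultimately invoke \cite{DST08}. However, the sketch you interpose---whole-space weighted Calder\'on--Zygmund theory, interior cutoffs, boundary flattening with frozen coefficients and odd reflection, and then a compactness--uniqueness argument to absorb the lower-order terms---is, to my reading, not the method actually used in \cite{DST08}. That paper works globally with the Green function $G(x,y)$ of the domain: writing $h=\int_E G(\cdot,y)f(y)\,dy$, it splits $D^2_x G$ into the Calder\'on--Zygmund kernel $D^2\Gamma(x-y)$ of the fundamental solution plus a corrector coming from the harmonic part of $G$; the first piece is handled by the classical weighted theory of singular integrals, and the corrector is controlled by pointwise decay estimates for the Green function and a H\"ormander-type condition, with no local flattening or reflection at all. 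Both routes are legitimate, and your version is the more classical Agmon--Douglis--Nirenberg-style argument transported to the weighted setting; its delicate points are exactly the ones you flag, namely the stability of $A_p$ under bi-Lipschitz coordinate changes and under even reflection across a hyperplane (both true, with constants controlled by the charts), plus the need for uniqueness of the weighted Dirichlet problem in the final absorption step, which one gets from $L^p_w\hookrightarrow L^q_{\mathrm{loc}}$ for some $q>1$ when $w\in A_p$ together with classical uniqueness. None of this is a gap, but none of it is in the paper either, which simply cites \cite{DST08}; so the fair summary is that your proposal matches the paper's level of rigor while sketching a plausible but likely different internal proof strategy than the one in the cited reference.
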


\subsection{Proof of Theorem \ref{local_wellposedness}}\label{sec_2_2}

In this section, we prove the existence and uniqueness of local solutions. We will apply Schauder's fixed-point theorem to find a local solution, which is a fixed point of the mapping defined by $F(\bar{u},\bar{v}) = (u,v)$ in $X = L^{2/\theta}\left((0,T);L_w^2(\Omega)\right) \times L^{2/\theta}\left((0,T);L_{1/w}^2(\Omega)\right)$ for some $T > 0$ to be chosen later and $\theta$ given in (\ref{def_theta}). Here in the definition of $F$, given $(\bar{u},\bar{v})$, $(u,v)$ is the solution of
\begin{align}
w u_t &= \nabla \cdot \left[ w (\nabla u - u \nabla \bar{\psi}) \right], \label{eqn_u_bar}\\
w^{-1} v_t &= \nabla \cdot \left[ w^{-1} (\nabla v + v \nabla \bar{\psi}) \right], \label{eqn_v_bar}
\end{align}
with boundary conditions
\begin{align}
(\nabla u - u \nabla \bar{\psi}) \cdot \nu &= 0, \label{BC_u_bar}\\
(\nabla v + v \nabla \bar{\psi}) \cdot \nu &= 0, \label{BC_v_bar}
\end{align}
and initial data $(u_0,v_0)$ as in (\ref{initial_u})--(\ref{initial_v}),
where $\bar{\psi}$ is determined by
\begin{align}
\Delta \bar{\psi} &= w \bar{u} - w^{-1} \bar{v} \qquad\mbox{in } \Omega, \label{eqn_psi_bar}\\
\bar{\psi} &= 0 \qquad \mbox{on } \partial\Omega, \label{BC_psi_bar}
\end{align}
for almost every $t \in [0,T]$.

First, we claim that $F$ is well-defined. The existence and uniqueness of solutions of the system (\ref{eqn_u_bar})--(\ref{BC_v_bar}) with initial data (\ref{initial_u})--(\ref{initial_v}) can be obtain by using analogous arguments in the linear theory of the parabolic equations \cite{LSU68} and employing weighted estimates in the weighted Sobolev spaces of weights $w$ and $w^{-1}$ (see the sketch in the Appendix). Moreover, the solution $(u,v)$ satisfies
\begin{align}
\frac{1}{2} \frac{d}{dt} \int_\Omega wu^2 dx &= -\int_\Omega w|\nabla u|^2 dx + \int_\Omega w u \nabla \bar{\psi} \cdot \nabla u dx, \label{id_01}\\
\frac{1}{2} \frac{d}{dt} \int_\Omega w^{-1} v^2 dx &= -\int_\Omega w^{-1} |\nabla v|^2 dx - \int_\Omega w^{-1} v \nabla \bar{\psi} \cdot \nabla v dx. \label{id_02}
\end{align}
By H\"{o}lder's inequality and the interpolation inequality (\ref{interpolation}), the last integrals in (\ref{id_01}) and (\ref{id_02}) can be estimated by
\begin{align}
\left| \int_\Omega w u \nabla \bar{\psi} \cdot \nabla u dx \right| &\leq \| \nabla u \|_{L_w^2(\Omega)} \| u \|_{L_w^{2\sqrt{2} / (\sqrt{2} - 1)}(\Omega)} \| \nabla \bar{\psi} \|_{L_w^{2\sqrt{2}}(\Omega)}, \notag\\
&\lesssim \| \nabla u \|_{L_w^2(\Omega)} \| u \|_{H_w^1(\Omega)}^{1-\theta} \| u \|_{L_w^2(\Omega)}^\theta \| \nabla \bar{\psi} \|_{L_w^{2\sqrt{2}}(\Omega)}, \label{id_03}\\
\left| \int_\Omega w^{-1} v \nabla \bar{\psi} \cdot \nabla v dx \right| &\leq \| \nabla v \|_{L_{1/w}^2(\Omega)} \| v \|_{L_{1/w}^{2\sqrt{2} / (\sqrt{2} - 1)}(\Omega)} \| \nabla \bar{\psi} \|_{L_{1/w}^{2\sqrt{2}}(\Omega)} \notag\\
&\lesssim \| \nabla v \|_{L_{1/w}^2(\Omega)} \| v \|_{H_{1/w}^1(\Omega)}^{1-\theta} \| v \|_{L_{1/w}^2(\Omega)}^\theta \| \nabla \bar{\psi} \|_{L_{1/w}^{2\sqrt{2}}(\Omega)}. \label{id_04}
\end{align}
We will make use of Lemma \ref{lem_2_4} to estimate the weighted norms of $\bar{\psi}$ in the last two inequalities. Since $\bar{\psi}$ satisfies (\ref{eqn_psi_bar})--(\ref{BC_psi_bar}), it can be decomposed into two parts $\bar{\psi} = \bar{\psi}_1 - \bar{\psi}_2$, where $\bar{\psi}_1$ and $\bar{\psi}_2$ solve
\begin{align*}
\begin{cases}
\Delta \bar{\psi}_1 = w \bar{u} \quad &\mbox{in } \Omega,\\[3mm]
\bar{\psi}_1 = 0 \quad &\mbox{on } \partial\Omega,
\end{cases}
\quad \mbox{and} \quad
\begin{cases}
\Delta \bar{\psi}_2 = w^{-1} \bar{v} \quad &\mbox{in } \Omega,\\[3mm]
\bar{\psi}_2 = 0 \quad &\mbox{on } \partial\Omega,
\end{cases}
\end{align*}
respectively. For almost every $t \in [0,T]$, $\bar{u} \in L_w^2(\Omega)$ and $\bar{v} \in L_{1/w}^2(\Omega)$. Moreover, we have
\begin{align*}
\|w\bar{u}\|_{L_{1/w}^2(\Omega)}  = \|\bar{u}\|_{L_w^2(\Omega)} \quad\mbox{and}\quad \|w^{-1}\bar{v}\|_{L_w^2(\Omega)} = \|\bar{v}\|_{L_{1/w}^2(\Omega)}.
\end{align*}
By H\"{o}lder's inequality, Sobolev inequality (\ref{sobolev_ineq}) and Lemma \ref{lem_2_4},
\begin{align*}
\| \nabla \bar{\psi}_1 \|_{L_w^{2\sqrt{2}}(\Omega)} &\leq \| \nabla \bar{\psi}_1 \|_{L_{1/w}^{2\sqrt{2} / (\sqrt{2} - 1)}(\Omega)} \| w \|_{L^{1/(\sqrt{2} - 1)}(\Omega)}^{1/2} \\
&\lesssim \| \bar{\psi}_1 \|_{H^2_{1/w}(\Omega)} \\
&\lesssim \| \bar{u} \|_{L^2_w(\Omega)}.
\end{align*}
As for $\bar{\psi}_2$, we have
\begin{align*}
\| \nabla \bar{\psi}_2 \|_{L_w^{2\sqrt{2}}(\Omega)} &\lesssim \| \bar{\psi}_2 \|_{H^2_w(\Omega)} \\
&\lesssim \| \bar{v} \|_{L^2_{1/w}(\Omega)}.
\end{align*}
Combining the last two inequalities, we obtain
\begin{align}\label{id_16}
\| \nabla \bar{\psi} \|_{L_w^{2\sqrt{2}}(\Omega)} \lesssim \| \bar{u} \|_{L^2_w(\Omega)} + \| \bar{v} \|_{L^2_{1/w}(\Omega)}.
\end{align}
Similarly, we have
\begin{align}\label{id_17}
\| \nabla \bar{\psi} \|_{L_{1/w}^{2\sqrt{2}}(\Omega)} \lesssim \| \bar{u} \|_{L^2_w(\Omega)} + \| \bar{v} \|_{L^2_{1/w}(\Omega)}.
\end{align}
Putting (\ref{id_16})--(\ref{id_17}) into (\ref{id_03})--(\ref{id_04}), we have
\begin{align}
\left| \int_\Omega w u \nabla \bar{\psi} \cdot \nabla u dx \right| &\lesssim \| \nabla u \|_{L_w^2(\Omega)} \| u \|_{H_w^1(\Omega)}^{1-\theta} \| u \|_{L_w^2(\Omega)}^\theta \left( \| \bar{u} \|_{L^2_w(\Omega)} + \| \bar{v} \|_{L^2_{1/w}(\Omega)} \right), \label{id_18}\\
\left| \int_\Omega w^{-1} v \nabla \bar{\psi} \cdot \nabla v dx \right| &\lesssim \| \nabla v \|_{L_{1/w}^2(\Omega)} \| v \|_{H_{1/w}^1(\Omega)}^{1-\theta} \| v \|_{L_{1/w}^2(\Omega)}^\theta \left( \| \bar{u} \|_{L^2_w(\Omega)} + \| \bar{v} \|_{L^2_{1/w}(\Omega)} \right). \label{id_19}
\end{align}
By Young's inequality, (\ref{id_18})--(\ref{id_19}) imply that
\begin{align}
&\quad\ \left| \int_\Omega w u \nabla \bar{\psi} \cdot \nabla u dx \right| \notag\\
&\leq \frac{1}{2} \| \nabla u \|_{L_w^2(\Omega)}^2 + C \| u \|_{L_w^2(\Omega)}^2 \left( \|\bar{u}\|_{L_w^2(\Omega)}^{2/\theta} + \|\bar{v}\|_{L_{1/w}^2(\Omega)}^{2/\theta} + \|\bar{u}\|_{L_w^2(\Omega)}^2 + \|\bar{v}\|_{L_{1/w}^2(\Omega)}^2 \right) \notag\\
&\leq \frac{1}{2} \| \nabla u \|_{L_w^2(\Omega)}^2 + C \| u \|_{L_w^2(\Omega)}^2 \left( \|\bar{u}\|_{L_w^2(\Omega)}^{2/\theta} + \|\bar{v}\|_{L_{1/w}^2(\Omega)}^{2/\theta} + 1 \right), \label{id_07}\\
&\quad\ \left| \int_\Omega w^{-1} v \nabla \bar{\psi} \cdot \nabla v dx \right| \notag\\
&\leq \frac{1}{2} \| \nabla v \|_{L_{1/w}^2(\Omega)}^2 + C \| v \|_{L_{1/w}^2(\Omega)}^2 \left( \|\bar{u}\|_{L_w^2(\Omega)}^{2/\theta} + \|\bar{v}\|_{L_{1/w}^2(\Omega)}^{2/\theta} + \|\bar{u}\|_{L_w^2(\Omega)}^2 + \|\bar{v}\|_{L_{1/w}^2(\Omega)}^2 \right) \notag\\
&\leq \frac{1}{2} \| \nabla v \|_{L_{1/w}^2(\Omega)}^2 + C \| v \|_{L_{1/w}^2(\Omega)}^2 \left( \|\bar{u}\|_{L_w^2(\Omega)}^{2/\theta} + \|\bar{v}\|_{L_{1/w}^2(\Omega)}^{2/\theta} + 1 \right). \label{id_08}
\end{align}
Plugging (\ref{id_07})--(\ref{id_08}) into (\ref{id_01})--(\ref{id_02}), we obtain
\begin{align}
&\quad\ \frac{d}{dt} \left( \| u \|_{L_w^2(\Omega)}^2 + \| v \|_{L_{1/w}^2(\Omega)}^2 \right) + \| \nabla u \|_{L_w^2(\Omega)}^2 + \| \nabla v \|_{L_{1/w}^2(\Omega)}^2 \notag\\
&\leq C \left( \| u \|_{L_w^2(\Omega)}^2 + \| v \|_{L_{1/w}^2(\Omega)}^2 \right) \left( \|\bar{u}\|_{L_w^2(\Omega)}^{2/\theta} + \|\bar{v}\|_{L_{1/w}^2(\Omega)}^{2/\theta} + 1 \right). \label{id_09}
\end{align}
By Gronwall's inequality,
\begin{align}
&\quad\ \sup_{t \in [0,T]}\left( \| u \|_{L_w^2(\Omega)}^2 + \| v \|_{L_{1/w}^2(\Omega)}^2 \right) \notag\\
&\leq \left( \| u_0 \|_{L_w^2(\Omega)}^2 + \| v_0 \|_{L_{1/w}^2(\Omega)}^2 \right) \exp \left\{ C\int_0^T \left( \|\bar{u}\|_{L_w^2(\Omega)}^{2/\theta} + \|\bar{v}\|_{L_{1/w}^2(\Omega)}^{2/\theta} + 1 \right) dt \right\}. \label{id_10}
\end{align}
Inequality (\ref{id_10}) implies
\begin{align*}
&\quad\ \int_0^T \left( \| u \|_{L_w^2(\Omega)}^{2/\theta} + \| v \|_{L_{1/w}^2(\Omega)}^{2/\theta} \right) dt \\
&\leq T \left( \| u_0 \|_{L_w^2(\Omega)}^2 + \| v_0 \|_{L_{1/w}^2(\Omega)}^2 \right)^{1/\theta} \exp \left\{ \frac{C}{\theta} \int_0^T \left( \|\bar{u}\|_{L_w^2(\Omega)}^{2/\theta} + \|\bar{v}\|_{L_{1/w}^2(\Omega)}^{2/\theta} + 1 \right) dt \right\},
\end{align*}
which gives
\begin{align*}
\| (u,v) \|_X &\leq T^{\theta/2} \left( \| u_0 \|_{L_w^2(\Omega)}^2 + \| v_0 \|_{L_{1/w}^2(\Omega)}^2 \right)^{1/2} \exp \left\{ \frac{C}{2} \left( \| (\bar{u},\bar{v}) \|_X^{2/\theta} + T \right) \right\} \\
&\leq T^{\theta/2} \left( \| u_0 \|_{L_w^2(\Omega)}^2 + \| v_0 \|_{L_{1/w}^2(\Omega)}^2 \right)^{1/2} \exp \left\{ \frac{C}{2} \left( 1 + T \right) \right\}
\end{align*}
for any $(\bar{u},\bar{v}) \in B_1(0) \subset X$. Therefore, we can choose $T$ sufficiently small so that $F$ maps the unit ball $B_1(0) \subset X$ into itself. Moreover, combining (\ref{id_09}) and (\ref{id_10}), it holds that $u \in L^2\left((0,T);H_w^1(\Omega)\right)$ and $v \in L^2\left((0,T);H_{1/w}^1(\Omega)\right)$, and hence $u_t \in L^2\left((0,T);H_w^{-1}(\Omega)\right)$ and $v_t \in L^2\left((0,T);H_{1/w}^{-1}(\Omega)\right)$. In view of Lemma \ref{lem_2_3}, we can apply Aubin-Lions lemma to conclude that $F(B_1(0))$ is precompact in $L^2((0,T);L_w^2(\Omega)) \times L^2((0,T);L_{1/w}^2(\Omega))$. Using this fact together with (\ref{id_10}), we conclude the compactness in $X = L^{2/\theta}\left((0,T);L_w^2(\Omega)\right) \times L^{2/\theta}\left((0,T);L_{1/w}^2(\Omega)\right)$. Therefore, by applying Schauder's fixed-point theorem, we obtain a solution of (\ref{eqn_u})--(\ref{initial_psi}).
 
Next, we are going to show the solution we obtained is unique since Schauder's fixed-point theorem does not guarantee uniqueness. Suppose that $(u_i,v_i,\psi_i)$, $i = 1,2$, are both solutions of (\ref{eqn_u})--(\ref{initial_psi}). Multiplying the difference of equations of $u_1$ and $u_2$ by $u_1 - u_2$ and using estimates analogous to (\ref{id_07})--(\ref{id_08}), we have
\begin{align}
&\quad\ \frac{1}{2} \frac{d}{dt} \| u_1 - u_2 \|_{L_w^2(\Omega)}^2 + \| \nabla (u_1 - u_2) \|_{L_w^2(\Omega)}^2 \notag\\
&= \int_\Omega w (u_1 \nabla \psi_1 - u_2 \nabla \psi_2) \cdot \nabla (u_1 - u_2) dx \notag\\
&\leq \| \nabla (u_1 - u_2) \|_{L_w^2(\Omega)} \| u_1 - u_2 \|_{L_w^{2\sqrt{2} / (\sqrt{2} - 1)}(\Omega)} \| \nabla \psi_1 \|_{L_w^{2\sqrt{2}}(\Omega)} \notag\\
&\quad\quad + \| \nabla (u_1 - u_2) \|_{L_w^2(\Omega)} \| u_2 \|_{L_w^{2\sqrt{2} / (\sqrt{2} - 1)}(\Omega)} \| \nabla (\psi_1 - \psi_2) \|_{L_w^{2\sqrt{2}}(\Omega)} \notag\\
&\leq \frac{1}{2} \| \nabla (u_1 - u_2) \|_{L_w^2(\Omega)}^2 + C \| u_1 - u_2 \|_{L_w^2(\Omega)}^2 \left( \| u_1 \|_{L_w^2(\Omega)}^{2/\theta} + \| v_1 \|_{L_{1/w}^2(\Omega)}^{2/\theta} + 1 \right) \label{id_11}\\
&\quad\quad + C \| u_2 \|_{H_w^1(\Omega)}^2 \left( \| u_1 - u_2 \|_{L_w^2(\Omega)}^2 + \| v_1 - v_2 \|_{L_{1/w}^2(\Omega)}^2 \right). \notag
\end{align}
Similarly, for $v_1 - v_2$, we have
\begin{align}
&\quad\ \frac{1}{2} \frac{d}{dt} \| v_1 - v_2 \|_{L_{1/w}^2(\Omega)}^2 + \| \nabla (v_1 - v_2) \|_{L_{1/w}^2(\Omega)}^2 \notag\\
&\leq \frac{1}{2} \| \nabla (v_1 - v_2) \|_{L_{1/w}^2(\Omega)}^2 + C \| v_1 - v_2 \|_{L_{1/w}^2(\Omega)}^2 \left( \| u_1 \|_{L_w^2(\Omega)}^{2/\theta} + \| v_1 \|_{L_{1/w}^2(\Omega)}^{2/\theta} + 1 \right) \label{id_12}\\
&\quad\quad + C \| v_2 \|_{H_{1/w}^1(\Omega)}^2 \left( \| u_1 - u_2 \|_{L_w^2(\Omega)}^2 + \| v_1 - v_2 \|_{L_{1/w}^2(\Omega)}^2 \right). \notag
\end{align}
Combining (\ref{id_11}) and (\ref{id_12}), we obtain
\begin{align}
&\quad\ \frac{d}{dt} \left( \| u_1 - u_2 \|_{L_w^2(\Omega)}^2  + \| v_1 - v_2 \|_{L_{1/w}^2(\Omega)}^2 \right) + \| \nabla(u_1 - u_2) \|_{L_w^2(\Omega)}^2  + \| \nabla(v_1 - v_2) \|_{L_{1/w}^2(\Omega)}^2 \notag\\
&\leq C \left( \| u_1 - u_2 \|_{L_w^2(\Omega)}^2  + \| v_1 - v_2 \|_{L_{1/w}^2(\Omega)}^2 \right) \label{id_13}\\
&\quad\quad \cdot \left( \| u_1 \|_{L_w^2(\Omega)}^{2/\theta} + \| v_1 \|_{L_{1/w}^2(\Omega)}^{2/\theta} + \| u_2 \|_{H_w^1(\Omega)}^2 + \| v_2 \|_{H_{1/w}^1(\Omega)}^2 + 1 \right). \notag
\end{align}
Since $\| u_1 \|_{L_w^2(\Omega)}^{2/\theta} + \| v_1 \|_{L_{1/w}^2(\Omega)}^{2/\theta} + \| u_2 \|_{H_w^1(\Omega)}^2 + \| v_2 \|_{H_{1/w}^1(\Omega)}^2 + 1 \in L^1((0,T))$, by Gronwall's inequality, (\ref{id_13}) implies that $(u_1,v_1) \equiv (u_2,v_2)$, and it follows that $\psi_1 \equiv \psi_2$.

To complete the proof, it suffices to show that the $u,v \geq 0$ for the solution $(u,v,\psi)$. Multiplying the equations of $u$ and $v$ by $u_- := \max\{-u,0\}$ and $v_- := \max\{-v,0\}$, respectively, and summing up, we get
\begin{align}
&\quad\ \frac{1}{2} \frac{d}{dt} \left( \| u_- \|_{L_w^2(\Omega)}^2 + \| v_- \|_{L_{1/w}^2(\Omega)}^2 \right) + \| \nabla u_- \|_{L_w^2(\Omega)}^2 + \| \nabla v_- \|_{L_{1/w}^2(\Omega)}^2 \notag\\
&= \int_\Omega w u_- \nabla \psi \cdot \nabla u_- dx - \int_\Omega w^{-1} v_- \nabla \psi \cdot \nabla v_- dx. \label{id_14}
\end{align}
Again, by repeating the argument for (\ref{id_09}), (\ref{id_14}) implies
\begin{align*}
&\quad\ \frac{d}{dt} \left( \| u_- \|_{L_w^2(\Omega)}^2 + \| v_- \|_{L_{1/w}^2(\Omega)}^2 \right) + \| \nabla u_- \|_{L_w^2(\Omega)}^2 + \| \nabla v_- \|_{L_{1/w}^2(\Omega)}^2 \notag\\
&\leq C \left( \| u_- \|_{L_w^2(\Omega)}^2 + \| v_- \|_{L_{1/w}^2(\Omega)}^2 \right) \left( \|u\|_{L_w^2(\Omega)}^{2/\theta} + \|v\|_{L_{1/w}^2(\Omega)}^{2/\theta} + 1 \right).
\end{align*}
Therefore, Gronwall's inequality implies that $u_- \equiv v_- \equiv 0$, that is, $u,v \geq 0$.

\section{Global existence}\label{sec_3}

In this section, we prove Theorem \ref{global_wellposedness}.

\begin{proof}[Proof of Theorem \ref{global_wellposedness}]
Since the local well-posedness is established by Theorem \ref{local_wellposedness}, it suffices to prove uniform-in-time bounds for $\| u \|_{L_w^2(\Omega)}$ and $\| v \|_{L_{1/w}^2(\Omega)}$. Multiplying the equation (\ref{new_eqn_u}) by $u$, integrating over $\Omega$ and doing integration by parts yield
\begin{align*}
\frac{1}{2} \frac{d}{dt} \int_\Omega wu^2 dx &= -\int_\Omega w|\nabla u|^2 dx + \int_\Omega w u \nabla \psi \cdot \nabla u dx.
\end{align*}
By H\"{o}lder's inequality and the analogous arguments as in (\ref{id_03}) and (\ref{id_16}), we have
\begin{align*}
\left| \int_\Omega w u \nabla \bar{\psi} \cdot \nabla u dx \right| &\leq \| \nabla u \|_{L_w^2(\Omega)} \| u \|_{L_w^{2\sqrt{2} / (\sqrt{2} - 1)}(\Omega)} \| \nabla \psi \|_{L_w^{2\sqrt{2}}(\Omega)}, \\
&\lesssim \| \nabla u \|_{L_w^2(\Omega)} \| u \|_{H_w^1(\Omega)}^{1-\theta} \| u \|_{L_w^2(\Omega)}^\theta \left( \| u \|_{L^2_w(\Omega)} + \| v \|_{L^2_{1/w}(\Omega)} \right).
\end{align*}
Therefore,
\begin{align}\label{id_20}
&\quad\ \frac{1}{2} \frac{d}{dt} \int_\Omega wu^2 dx + \int_\Omega w|\nabla u|^2 dx \notag\\
&\lesssim \| \nabla u \|_{L_w^2(\Omega)} \| u \|_{H_w^1(\Omega)}^{1-\theta} \| u \|_{L_w^2(\Omega)}^\theta \left( \| u \|_{L^2_w(\Omega)} + \| v \|_{L^2_{1/w}(\Omega)} \right).
\end{align}
Similarly, for $v$, we have
\begin{align}\label{id_21}
&\quad\ \frac{1}{2} \frac{d}{dt} \int_\Omega w^{-1} v^2 dx + \int_\Omega w^{-1} |\nabla v|^2 dx \notag\\
&\lesssim  \| \nabla v \|_{L_{1/w}^2(\Omega)} \| v \|_{H_{1/w}^1(\Omega)}^{1-\theta} \| v \|_{L_{1/w}^2(\Omega)}^\theta \left( \| u \|_{L^2_w(\Omega)} + \| v \|_{L^2_{1/w}(\Omega)} \right).
\end{align}
Combining (\ref{id_20})--(\ref{id_21}) and using Young's inequality yield
\begin{align}
&\quad\ \frac{d}{dt} \left( \| u \|_{L_w^2(\Omega)}^2 + \| v \|_{L_{1/w}^2(\Omega)}^2 \right) + \| \nabla u \|_{L_w^2(\Omega)}^2 + \| \nabla v \|_{L_{1/w}^2(\Omega)}^2 \notag\\
&\lesssim \left( \| u \|_{L_w^2(\Omega)}^2 + \| v \|_{L_{1/w}^2(\Omega)}^2 \right) \left( \|u\|_{L_w^2(\Omega)}^{2/\theta} + \|v\|_{L_{1/w}^2(\Omega)}^{2/\theta} + \|u\|_{L_w^2(\Omega)}^2 + \|v\|_{L_{1/w}^2(\Omega)}^2 \right) \notag\\
&\lesssim \left( \| u \|_{L_w^2(\Omega)}^2 + \| v \|_{L_{1/w}^2(\Omega)}^2 \right)^{1 + \frac{1}{\theta}} + \left( \| u \|_{L_w^2(\Omega)}^2 + \| v \|_{L_{1/w}^2(\Omega)}^2 \right)^2. \label{id_22}
\end{align}
By H\"{o}lder's inequality and Lemma \ref{lem_2_1}, we have
\begin{align*}
\| u - u_{\Omega,w} \|_{L_w^2(\Omega)} \lesssim \| \nabla u \|_{L_w^2(\Omega)},
\end{align*}
where
\begin{align*}
u_{\Omega,w} = \left( \int_\Omega w(x) dx \right)^{-1} \int_\Omega u(x) w(x) dx.
\end{align*}
The last inequality implies
\begin{align}\label{id_31}
\| u \|_{L_w^2(\Omega)}^2 \lesssim \| \nabla u \|_{L_w^2(\Omega)}^2 + \| u \|_{L_w^1(\Omega)}^2.
\end{align}
Similarly,
\begin{align}\label{id_32}
\| v \|_{L_{1/w}^2(\Omega)}^2 \lesssim \| \nabla v \|_{L_{1/w}^2(\Omega)}^2 + \| v \|_{L_{1/w}^1(\Omega)}^2.
\end{align}
Using test functions $\xi \equiv 1$ and $\eta \equiv 1$ in Definition \ref{def_sol}, it gives that the total concentrations of individual ions are conserved, that is,
\begin{align*}
\| u(\cdot,t) \|_{L^1_w(\Omega)} = \| u_0 \|_{L^1_w(\Omega)} \quad \mbox{and} \quad \| v(\cdot,t) \|_{L^1_{1/w}(\Omega)} = \| v_0 \|_{L^1_{1/w}(\Omega)}
\end{align*}
for almost all $t > 0$. By H\"{o}lder's inequality,
\begin{align}
\| u(\cdot,t) \|_{L^1_w(\Omega)}^2 + \| v(\cdot,t) \|_{L^1_{1/w}(\Omega)}^2 &= \| u_0 \|_{L^1_w(\Omega)}^2 + \| v_0 \|_{L^1_{1/w}(\Omega)}^2 \notag\\
&\leq \| u_0 \|_{L^2_w(\Omega)}^2 \int_\Omega w dx + \| v_0 \|_{L^2_{1/w}(\Omega)}^2 \int_\Omega w^{-1} dx \notag\\
&\leq C\left( \| u_0 \|_{L^2_w(\Omega)}^2 + \| v_0 \|_{L^2_{1/w}(\Omega)}^2 \right) \leq C\epsilon_1, \label{id_24}
\end{align}
provided that
\begin{align*}
\| u_0 \|_{L^2_w(\Omega)}^2 + \| v_0 \|_{L^2_{1/w}(\Omega)}^2 \leq \epsilon_1.
\end{align*}
Combining (\ref{id_31})--(\ref{id_24}), we obtain
\begin{align}\label{id_33}
\| \nabla u \|_{L_w^2(\Omega)}^2 + \| \nabla v \|_{L_{1/w}^2(\Omega)}^2 \geq \frac{1}{C} \left( \| u \|_{L_w^2(\Omega)}^2 + \| v \|_{L_{1/w}^2(\Omega)}^2 \right) - C\epsilon_1.
\end{align}
Let
\begin{align*}
H(t) = \| u \|_{L_w^2(\Omega)}^2 + \| v \|_{L_{1/w}^2(\Omega)}^2.
\end{align*}
Putting (\ref{id_33}) into (\ref{id_22}), we conclude that
\begin{align}\label{id_25}
\frac{d}{dt} H \leq - \frac{1}{C} H + C \left( H^{1 + \frac{1}{\theta}} + H^2 + \epsilon_1 \right) =: \Gamma(H).
\end{align}
By choosing $\epsilon_1$ sufficiently small, we have that the polynomial $\Gamma$ admits negative values on an interval, and then we define
\begin{align*}
\epsilon_2 := \sup \{ s > 0 : \Gamma(s) < 0 \} > 0.
\end{align*}
Therefore, we can let
\begin{align*}
\epsilon := \min \{ \epsilon_1 , \epsilon_2 \}.
\end{align*}
The inequality (\ref{id_25}) and the assumption $H(0) < \epsilon$ then imply that $H(t) < \epsilon$ for all $t \geq 0$. The proof is completed.
\end{proof}

\section{Appendix}

In the appendix, we present a sketch of the proof of the existence and uniqueness for system (\ref{eqn_u_bar})--(\ref{BC_v_bar}) with initial data (\ref{initial_u})--(\ref{initial_v}).

The proof of the existence is based on Galerkin's method. We choose $\{ \varphi_k \}_{k=1}^\infty$, which is an orthogonal basis of $H_w^1(\Omega)$ and an orthonormal basis of $L_w^2(\Omega)$, and $\{ \widetilde{\varphi}_k \}_{k=1}^\infty$, which is an orthogonal basis of $H_{1/w}^1(\Omega)$ and an orthonormal basis of $L_{1/w}^2(\Omega)$. Then we can consider the following approximate solutions
\begin{align*}
u_N(x,t) &= \sum_{k=1}^N \beta_{N,k}(t) \varphi_k (x), \\
v_N(x,t) &= \sum_{k=1}^N \widetilde{\beta}_{N,k}(t) \widetilde{\varphi}_k (x),
\end{align*}
where $\beta_{N,k}$ and $\widetilde{\beta}_{N,k}$ are determined by
\begin{align}
\dfrac{d}{dt} \beta_{N,k} &= \dfrac{d}{dt} \left\langle u_N , \varphi_k \right\rangle_w = - \left\langle \nabla u_N - u_N \nabla \bar{\psi} , \nabla \varphi_k \right\rangle_w \notag\\
&= -\| \nabla \varphi_k \|_{L_w^2(\Omega)} \beta_{N,k} + \sum_{l=1}^N \left\langle \varphi_l \nabla \bar{\psi} , \nabla \varphi_k \right\rangle_w \beta_{N,l}, \label{a_01}\\
\dfrac{d}{dt} \widetilde{\beta}_{N,k} &= \dfrac{d}{dt} \left\langle v_N , \widetilde{\varphi}_k \right\rangle_{1/w} = - \left\langle \nabla v_N + v_N \nabla \bar{\psi} , \nabla \widetilde{\varphi}_k \right\rangle_{1/w} \notag\\
&= -\| \nabla \widetilde{\varphi}_k \|_{L_{1/w}^2(\Omega)} \widetilde{\beta}_{N,k} - \sum_{l=1}^N \left\langle \widetilde{\varphi}_l \nabla \bar{\psi} , \nabla \widetilde{\varphi}_k \right\rangle_{1/w} \widetilde{\beta}_{N,l}, \label{a_02}
\end{align}
for $t \in (0,T)$, and 
\begin{align}
\beta_{N,k} (0) &= \left\langle u_0 , \varphi_k \right\rangle_w, \label{a_03}\\
\widetilde{\beta}_{N,k} (0) &= \left\langle v_0 , \widetilde{\varphi}_k \right\rangle_{1/w}, \label{a_04}
\end{align}
for all $k = 1, ..., N$. Here $\langle \cdot , \cdot \rangle_w$ and $\langle \cdot , \cdot \rangle_{1/w}$ denote inner products in $L_w^2(\Omega)$ and $L_{1/w}^2(\Omega)$, respectively. By similar estimates as in (\ref{id_18}) and (\ref{id_19}), coefficients in the equations (\ref{a_01}) and (\ref{a_02}) are integrable on $[0,T]$, which implies the unique solvability of (\ref{a_01})--(\ref{a_04}). Moreover, multiplying (\ref{a_01}) and (\ref{a_02}) by $\beta_{N,k}$ and $\widetilde{\beta}_{N,k}$ respectively, for each $k = 1, ..., N$, and summing the resulting identities, we obtain
\begin{align*}
\frac{1}{2} \frac{d}{dt} \| u_N \|_{L_w^2(\Omega)} &= - \left\langle \nabla u_N - u_N \nabla \bar{\psi} , \nabla u_N \right\rangle_w, \\
\frac{1}{2} \frac{d}{dt} \| v_N \|_{L_{1/w}^2(\Omega)} &= - \left\langle \nabla v_N + v_N \nabla \bar{\psi} , \nabla v_N \right\rangle_{1/w}.
\end{align*}
Using the same argument for (\ref{id_09}), we have
\begin{align*}
&\quad\ \frac{d}{dt} \left( \| u_N \|_{L_w^2(\Omega)}^2 + \| v_N \|_{L_{1/w}^2(\Omega)}^2 \right) + \| \nabla u_N \|_{L_w^2(\Omega)}^2 + \| \nabla v_N \|_{L_{1/w}^2(\Omega)}^2 \notag\\
&\leq C \left( \| u_N \|_{L_w^2(\Omega)}^2 + \| v_N \|_{L_{1/w}^2(\Omega)}^2 \right) \left( \|\bar{u}\|_{L_w^2(\Omega)}^{2/\theta} + \|\bar{v}\|_{L_{1/w}^2(\Omega)}^{2/\theta} + 1 \right),
\end{align*}
By Gronwall's inequality, the last estimate gives $L^\infty L_w^2 \cap L^2 H_w^1$ and $L^\infty L_{1/w}^2 \cap L^2 H_{1/w}^1$ uniform bounds for $u_N$ and $v_N$, respectively. Therefore, there exists a subsequence, still denoted by $( u_N , v_N)$, such that $u_N \rightharpoonup u$ on $L^2\left((0,T);H_w^1(\Omega)\right)$ and $v_N \rightharpoonup v$ on $L^2\left((0,T);H_{1/w}^1(\Omega)\right)$ weakly as $N \rightarrow \infty$. It is straightforward to see that $(u,v)$ is a solution to the system (\ref{eqn_u_bar})--(\ref{BC_v_bar}) with initial data (\ref{initial_u})--(\ref{initial_v}). As for the uniqueness, it can be proved by following the same lines as in (\ref{id_11})--(\ref{id_13}).


\begin{thebibliography}{10}

\bibitem{BD00} {\sc P. Biler and J. Dolbeault}, {\em Long time behavior of solutions of Nernst-Planck and Debye-H\"{u}ckel drift-diffusion systems}, Ann. Henri Poincar\'{e}, 1 (2000), 461--472.

\bibitem{BHN94} {\sc P. Biler, W. Hebisch and T. Nadzieja}, {\em The Debye system: existence and large time behavior of solutions}, Nonlinear Anal. 23 (1994), no. 9, 1189--1209.

\bibitem{BN97} {\sc P, Biler and T. Nadzieja}, {\em A singular problem in electrolytes theory}, Math. Methods Appl. Sci. 20 (1997), no. 9, 767--782.

\bibitem{CSH17} {\sc J. Cartailler, Z. Schuss and D. Holcman}, {\em Analysis of the Poisson-Nernst-Planck equation in a ball for modeling the Voltage-Current relation in neurobiological microdomains}, Phys. D 339 (2017), 39--48.

\bibitem{CLE97} {\sc D. Chen, J. Lear and B. Eisenberg}, {\em Permeation through an open channel: Poisson-Nernst-Planck theory of a synthetic ionic channel}, Biophys J. (1997), 72(1), 97--116.

\bibitem{C92} {\sc S.-K. Chua}, {\em Extension theorems on weighted Sobolev spaces}, Indiana Univ. Math. J. 41 (1992), no. 4, 1027--1076.

\bibitem{CI19} {\sc P. Constantin and M. Ignatova}, {\em On the Nernst-Planck-Navier-Stokes system}, Arch. Ration. Mech. Anal. 232 (2019), no. 3, 1379--1428. 

\bibitem{DST08} {\sc R. G. Dur\'{a}n, M. Sanmartino and M. Toschi}, {\em Weighted a priori estimates for the Poisson equation}, Indiana Univ. Math. J. 57 (2008), no. 7, 3463--3478.

\bibitem{E98} {\sc B. Eisenberg}, {\em Ionic channels in biological membranes: natural nanotubes}, Acc. Chem. Res., 31 (1998), 117--123.

\bibitem{FKS82} {\sc E. B. Fabes, C. E. Kenig and R. P. Serapioni}, {\em The local regularity of solutions of degenerate elliptic equations}, Comm. Partial Differential Equations 7 (1982), no. 1, 77--116.

\bibitem{F00} {\sc A. Fr\"{o}hlich}, {\em The Helmholtz decomposition of weighted $L^q$-spaces for Muckenhoupt weights}, Ann. Univ. Ferrara Sez. VII (N.S.) 46 (2000), 11--19.

\bibitem{G85}{\sc H. Gajewski}, {\em On existence, uniqueness and asymptotic behavior of solutions of the basic equations for carrier transport in semiconductors}, Z. Angew. Math. Mech. 65 (1985), no. 2, 101--108.

\bibitem{GG86} {\sc H. Gajewski and K. Gr\"{o}ger}, {\em On the basic equations for carrier transport in semiconductors}, J. Math. Anal. Appl. 113 (1986), no. 1, 12--35.

\bibitem{GNE02} {\sc D. Gillespie, W. Nonner and R. S. Eisenberg}, {\em Coupling Poisson-Nernst-Planck and density functional theory to calculate ion flux}, J. Phys.: Condens. Matter, 14 (2002), 12129--12145.

\bibitem{HKM06} {\sc J. Heinonen, T. Kilpel\"{a}inen and O. Martio}, {\em Nonlinear potential theory of degenerate elliptic equations}, Unabridged republication of the 1993 original, Dover Publications, Inc., Mineola, NY, 2006.

\bibitem{H01} {\sc B. Hille}, {\em Ion Channels of Excitable Membranes}, 3rd Edition, Sinauer Associates, Inc., 2001.

\bibitem{H19} {\sc C.-Y. Hsieh}, {\em Stability of radial symmetric solutions of the Poisson-Nernst-Planck system in annular domains}, Discrete Contin. Dyn. Syst. Ser. B 24 (2019), no. 6, 2657--2681.

\bibitem{HHLLL15} {\sc C.-Y. Hsieh, Y. Hyon, H. Lee, T.-C. Lin and C. Liu}, {\em Transport of charged particles: Entropy production and maximum dissipation principle}, J. Math. Anal. Appl., 422 (2015), 309--336.

\bibitem{HL15} {\sc C.-Y. Hsieh and T.-C. Lin}, {\em Exponential decay estimates for the stability of boundary layer solutions to Poisson-Nernst-Planck systems: one spatial dimension case}, SIAM J. Math. Anal. 47 (2015), no. 5, 3442--3465.

\bibitem{HLLL20} {\sc C.-Y. Hsieh, T.-C. Lin, C. Liu and P. Liu}, {\em Global existence of the non-isothermal Poisson-Nernst-Planck-Fourier system}, J. Differential Equations 269 (2020), no. 9, 7287--7310.

\bibitem{HY20} {\sc C.-Y. Hsieh and Y. Yu}, {\em Debye layer in Poisson-Boltzmann model with isolated singularities}, Arch. Ration. Mech. Anal. 236 (2020), no. 1, 289--327.

\bibitem{KBA07} {\sc M. S. Kilic, M. Z. Bazant and A. Ajdari}, {\em Steric effects in the dynamics of electrolytes at large applied voltages. I. Double-layer charging}, Phys. Rev. E, 75 (2007), 021502.

\bibitem{KBA07-2} {\sc M. S. Kilic, M. Z. Bazant and A. Ajdari}, {\em Steric effects in the dynamics of electrolytes at large applied voltages. II. Modified Poisson-Nernst-Planck equations}, Phys. Rev. E, 75 (2007), 021503.

\bibitem{LSU68} {\sc O. A. Lady\v{z}enskaja, V. A. Solonnikov and N. N. Uralʹceva}, {\em Linear and quasilinear equations of parabolic type}, American Mathematical Society, Providence, R.I. 1968.

\bibitem{Lee16} {\sc C.-C. Lee}, {\em Asymptotic analysis of charge conserving Poisson-Boltzmann equations with variable dielectric coefficients}, Discrete Contin. Dyn. Syst. 36 (2016), no. 6, 3251--3276.

\bibitem{LLHLL11} {\sc C.-C. Lee, H. Lee, Y. Hyon, T.-C. Lin and C. Liu}, {\em New Poisson-Boltzmann type equations: one-dimensional solutions}, Nonlinearity 24 (2011), no. 2, 431--458.

\bibitem{LR18} {\sc C.-C. Lee and R. J. Ryham}, {\em Boundary asymptotics for a non-neutral electrochemistry model with small Debye length}, Z. Angew. Math. Phys. 69 (2018), no. 2, Art. 41, 13 pp.

\bibitem{LE14} {\sc T.-C. Lin and B. Eisenberg}, {\em A new approach to the Lennard-Jones potential and a new model: PNP-steric equations}, Commun. Math. Sci., 12 (2014), 149--173.

\bibitem{LJX18} {\sc P. Liu, X. Ji and Z. Xu}, {\em Modified Poisson-Nernst-Planck model with accurate Coulomb correlation in variable media}, SIAM J. Appl. Math., 78 (2018), 226--245.

\bibitem{MRS90} {\sc P. A. Markowich, C. A. Ringhofer and C. Schmeiser}, {\em Semiconductor Equations}, Springer-Verlag, Vienna, 1990.

\bibitem{M75} {\sc M. S. Mock}, {\em Asymptotic behavior of solutions of transport equations for semiconductor devices}, J. Math. Anal. Appl. 49 (1975), 215--225.

\bibitem{MJP07} {\sc Y. Mori, J. W. Jerome and C. S. Peskin}, {\em A three-dimensional model of cellular electrical activity}, Bull. Inst. Math. Acad. Sin. (N.S.) 2 (2007), no. 2, 367--390.

\bibitem{M72} {\sc B. Muckenhoupt}, {\em Weighted norm inequalities for the Hardy maximal function}, Trans. Amer. Math. Soc. 165 (1972), 207--226.

\bibitem{M74} {\sc B. Muckenhoupt}, {\em The equivalence of two conditions for weight functions}, Studia Math. 49 (1974), 101--106.

\bibitem{R19} {\sc M. Rathmair}, {\em On how Poincar\'{e} inequalities imply weighted ones}, Monatsh. Math. 188 (2019), no. 4, 753--763. 

\bibitem{RCA89} {\sc O. J. Riveros, T. L. Croxton and W. M. Armstrong},
{\em Liquid junction potentials calculated from numerical solutions of the Nernst-Planck and Poisson equations}, J. Theor. Biol., 140 (1989), 221--230.

\bibitem{RLW06} {\sc R. Ryham, C. Liu and Z.-Q. Wang}, {\em On electro-kinetic fluids: one dimensional configurations}, Discrete Contin. Dyn. Syst. Ser. B 6 (2006), no. 2, 357--371.

\bibitem{RLZ07} {\sc R. Ryham, C. Liu and L. Zikatanov}, {\em Mathematical models for the deformation of electrolyte droplets}, Discrete Contin. Dyn. Syst. Ser. B, 8 (2007), no. 3, 649--661.

\bibitem{S93} {\sc E. M. Stein}, {\em Harmonic analysis: real-variable methods, orthogonality, and oscillatory integrals}, With the assistance of Timothy S. Murphy. Princeton Mathematical Series, 43. Monographs in Harmonic Analysis, III. Princeton University Press, Princeton, NJ, 1993.

\bibitem{WXLLS14} {\sc L. Wan, S. Xu, M. Liao, C. Liu and P. Sheng}, {\em Self-consistent approach to global charge neutrality in electrokinetics: a surface potential trap model}, Phys. Rev. X, 4 (2014), 011042.

\bibitem{WLT16} {\sc Y. Wang, C. Liu and Z. Tan}, {\em A generalized Poisson-Nernst-Planck-Navier-Stokes model on the fluid with the crowded charged particles: derivation and its well-posedness}, SIAM J. Math. Anal., 48 (2016), 3191--3235.

\bibitem{XSL14} {\sc S. Xu, P. Sheng and C. Liu}, {\em An energetic variational approach for ion transport}, Commun. Math. Sci., 12 (2014), 779--789.

\bibitem{ZGLWS08} {\sc J. Zhang, X. Gong, C. Liu, W. Wen and P. Sheng}, {\em Electrorheological fluid dynamics}, Phys. Rev. Lett., 101 (2008), 194503.

\end{thebibliography}
\end{document}